\newtheorem*{theorem*}{Theorem}
\newtheorem*{theoremA*}{Theorem A}
\newtheorem*{theoremB*}{Theorem B}
\newtheorem{theorem}{Theorem}[section]
\newtheorem{lemma}[theorem]{Lemma}
\newtheorem{proposition}[theorem]{Proposition}
\newtheorem{conjecture}[theorem]{Conjecture}
\theoremstyle{definition}
\newtheorem{definition}[theorem]{Definition}
\newtheorem{notations}[theorem]{Notations}
\newtheorem{assumption}[theorem]{Assumption}
\newtheorem{example}[theorem]{Example}
\newtheorem{caution}[theorem]{Caution}
\theoremstyle{remark}
\newtheorem{remark}[theorem]{Remark}
\newcommand{\ord}{\textup{ord}}
\newcommand{\eord}{\textup{eord}}
\newcommand{\Eord}{\textup{Eord}}
\newcommand{\Mon}{\textup{Mon}}
\newcommand{\gr}{\textup{gr}}
\numberwithin{equation}{section}
\begin{document}


\title{Strongly Lech-independent ideals and Lech's conjecture}
\author{Cheng Meng}
\address{Cheng Meng\\ Yau Mathematical Sciences Center, Tsinghua University, Beijing 100084, China. \emph{Email:} {\rm cheng319000@tsinghua.edu.cn}}
\date{\today}
\maketitle

\begin{abstract}
We introduce the notion of strongly Lech-independent ideals as a generalization of Lech-independent ideals defined by Lech and Hanes, and use this notion to derive inequalities on multiplicities of ideals. In particular, we prove that if $(R,\mathfrak{m}) \to (S,\mathfrak{n})$ is a flat local extension of local rings with $\dim R=\dim S$, the completion of $S$ is the completion of a standard graded ring over a field $k$ with respect to the homogeneous maximal ideal, and the completion of $\mathfrak{m}S$ is the completion of a homogeneous ideal, then $e(R) \leq e(S)$.
\end{abstract}

\section{introduction}\label{se:intro}
Around 1960, Lech made the following remarkable conjecture on the Hilbert-Samuel multiplicities in \cite{lech1960note}:
\begin{conjecture}\label{conj: Lecj}Let $(R,\mathfrak{m}) \to (S,\mathfrak{n})$ be a flat local extension of local rings. Then $e(R) \leq e(S)$.
\end{conjecture}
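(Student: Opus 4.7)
The plan is to reduce the conjectured inequality $e(R) \leq e(S)$ to a purely graded statement about $S$ and then to attack that statement using the strongly Lech-independent ideal machinery introduced earlier in the paper. The first step is standard: since $R \to S$ is flat with $\mathfrak{m}S$ being $\mathfrak{n}$-primary, the Hilbert--Samuel identity
\[
\ell_{S}(S/\mathfrak{m}^{n}S) = \ell_{R}(R/\mathfrak{m}^{n}) \cdot \ell_{S}(S/\mathfrak{m}S)
\]
(obtained by filtering $R/\mathfrak{m}^{n}$ by copies of $R/\mathfrak{m}$ and using flatness) yields $e(\mathfrak{m}S,S) = e(R)\cdot\ell_{S}(S/\mathfrak{m}S)$. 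Thus $e(R) \leq e(S) = e(\mathfrak{n},S)$ is equivalent to
\[
e(\mathfrak{m}S,\,S) \;\leq\; e(\mathfrak{n},\,S) \cdot \ell_{S}(S/\mathfrak{m}S). \qquad (\ast)
\]

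Next, I would exploit the graded model. Writing $S = T_{\mathfrak{n}_{0}}$ for a standard graded $k$-algebra $T$ with homogeneous maximal ideal $\mathfrak{n}_{0}$, and taking $I \subset T$ to be the homogeneous ideal whose localization equals $\mathfrak{m}S$, both sides of $(\ast)$ can be read off from homogeneous data. It therefore suffices to prove the graded inequality
\[
e(I,\,T) \;\leq\; e(\mathfrak{n}_{0},\,T) \cdot \ell_{T}(T/I),
\]
valid for any $\mathfrak{n}_{0}$-primary homogeneous ideal $I$. The benefit of this reduction is that homogeneous generators of $I$ have well-defined degrees, and this rigidity is what should let me construct a strongly Lech-independent system inside $I$.

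Accordingly, the core of the argument should proceed as follows. Choose a homogeneous minimal generating set $f_{1},\ldots,f_{r}$ of $I$ with $\deg f_{i} = d_{i}$, and --- possibly after a generic change of coordinates on $T$, or after replacing certain $f_{i}$ by suitable linear combinations of homogeneous elements of the right degree --- extract a length-$d$ subsystem that is strongly Lech-independent in the sense developed earlier in the paper. The multiplicity inequalities attached to such a system should then convert into the desired graded inequality, with the factor $\ell_{T}(T/I)$ appearing naturally as a length of a colon module that measures the failure of the chosen elements to form a regular sequence.

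The principal obstacle is precisely this construction of a strongly Lech-independent subsystem inside $I$. The inequality $(\ast)$ is genuinely false for arbitrary $\mathfrak{m}$-primary ideals in a local ring --- Lech's classical bound only yields the strictly weaker $e(I,S) \leq d!\,e(S) \cdot \ell(S/I)$ --- so the homogeneity hypothesis must be used in an essential way. Identifying the correct homogeneous subsystem and checking the \emph{strong} independence condition (as opposed to the weaker Lech-independence of Lech and Hanes, which only suffices in much more restricted settings) is where I expect the technical heart of the proof to lie. Once that step is achieved, combining the graded inequality with the flatness identity recorded above delivers $e(R) \leq e(S)$ immediately.
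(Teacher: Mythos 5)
The statement you have chosen to prove is Lech's conjecture itself, which this paper does not prove: it remains open, and the paper establishes only the special case recorded in Theorem 4.7, where $S$ is the localization of a standard graded ring at its homogeneous maximal ideal and $\mathfrak{m}S$ is the localization of a homogeneous $\mathfrak{n}$-primary ideal. Judged on its own terms, your proposal has two concrete gaps. First, your key reduction is to a false statement: the inequality $e(I,T)\le e(\mathfrak{n}_0,T)\,\ell_T(T/I)$ fails for general homogeneous $\mathfrak{n}_0$-primary ideals; take $T=k[x,y]$ and $I=\mathfrak{n}_0^t$, where $e(I,T)=t^2$ while $e(\mathfrak{n}_0,T)\,\ell_T(T/I)=t(t+1)/2$. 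So homogeneity alone cannot carry the argument; you must use that $I$ is extended from $\mathfrak{m}$ along a flat map, i.e.\ that $I$ is strongly Lech-independent. But once you do, Propositions 3.11 and 3.21 give $e(I)=\ell(S/I)\,e(\Gamma)$ with $e(\Gamma)=e(R)$, so your inequality $(\ast)$ is literally equivalent to $e(R)\le e(S)$: the reduction is a restatement, not progress. Second, the step you yourself flag as the technical heart --- extracting, after a generic coordinate change, a strongly Lech-independent subsystem inside $I$ whose multiplicity inequalities yield $(\ast)$ --- is not established anywhere, and the paper's Examples 3.23 and 3.27 show how rigid and unstable strong Lech-independence is (it is not preserved under taking powers, and $\Gamma$-expandability alone does not even imply Lech-independence), so there is no reason such a subsystem exists. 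You also silently import hypotheses absent from the conjecture: in general $S$ has no graded model, $\mathfrak{m}S$ is not the localization of a homogeneous ideal, and $\mathfrak{m}S$ need not be $\mathfrak{n}$-primary (only the last admits a standard reduction).

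For contrast, in the special graded case the paper does handle, the argument runs quite differently from your outline: one fixes a standard set $\Gamma$ for which $\mathfrak{m}S$ is expandable (Propositions 3.9 and 3.21), shows that in the graded setting the sets $A_{2,t}$ are $k$-linearly independent modulo $\mathfrak{n}^t$ so that the Hilbert-series comparison of Theorem 4.5 becomes an equality, deduces the lower bound $e(\mathfrak{n})\ge e(\Gamma)\,\ell(S/\mathfrak{m}S)/t_rt_{r-1}\cdots t_{r-d+1}$, and then invokes Hanes' bound $\ell(S/\mathfrak{m}S)\ge t_1t_2\cdots t_r$ to conclude $e(S)\ge e(R)\,t_1\cdots t_{r-d}\ge e(R)$; no auxiliary subsystem of $I$ is ever constructed, and no inequality of the shape $e(I)\le e(\mathfrak{n})\,\ell(S/I)$ is used. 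If you want a realistic target, aim at reproving Theorem 4.7 along these lines rather than the full conjecture.
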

As the Hilbert-Samuel multiplicity measures the singularity of a ring, this conjecture roughly means that the singularity of $R$ is no worse than that of $S$ if $(R,\mathfrak{m}) \to (S,\mathfrak{n})$ is a flat local extension. This conjecture has now stood for more than sixty years and remains open in most cases. It has been proved in the following cases:
\begin{enumerate}
\item dim$R \leq$ 2 \cite{lech1960note};

\item $S/\mathfrak{m}S$ is a complete intersection \cite{lech1960note};

\item $R$ is a strict complete intersection \cite{herzog1991linear};

\item dim$R$ = 3 and $R$ has equal characteristic \cite{ma2017lech};

\item $R$ is a standard graded ring over a perfect field (localized at the homogeneous maximal ideal) \cite{ma2020lim}.
\end{enumerate}
For other results see \cite{hanes2001length}, \cite{hanes1999special}, \cite{herzog1990lech} and \cite{ma2014frobenius}.
In this paper the key concept is a new notion called \emph{strong Lech-independence}, which is a natural generalization of Lech-independence introduced in \cite{lech1964inequalities} and explored in \cite{hanes2001length}. By definition, an ideal $I \subset S$ is strongly Lech-independent if for any $i$, $I^i/I^{i+1}$ is free over $S/I$, and a sequence of elements is strongly Lech-independent if it forms a minimal generating set of a strongly Lech-independent ideal. There are two typical examples of strongly Lech-independent ideals: ideals generated by a regular sequence, and the maximal ideal of a local ring.

Under strong Lech-independence assumption, we can calculate the colength of powers of an ideal using the data on the monomials of a minimal generating set of the ideal, thus we can derive inequalities on multiplicities. The main result on multiplicities of ideals is the following particular case of Lech's conjecture:

\begin{theoremA*}[See \Cref{thm: maintheoremA}]Let $(R, \mathfrak{m}) \to (S, \mathfrak{n})$ be a flat local map of Noetherian local rings. Assume:
\begin{enumerate}
\item Up to completion $S$ is standard graded;
\item $\dim R=\dim S$;
\item Up to completion $\mathfrak{m}S$ is homogeneous with homogeneous generators of degrees $t_1\leq t_2 \leq \ldots \leq t_r$.
\end{enumerate}
Then $e(S) \geq e(R)t_1...t_{r-d}$.  
\end{theoremA*}
This theorem will lead to the inequality $e(S) \geq e(R)$ because we always have $r \geq d$ and $t_1 \geq 1$.

We can also derive an inequality of the other direction, that is, we can find an upper bound of $e(S)$ using strong Lech-independence condition. For $f \in S$ where $(S,\mathfrak{n})$ is a Noetherian local ring, let $\ord(f)$ be the $\mathfrak{n}$-adic order of $f$, see \Cref{def: nadicorder}. Let $\bar{v}(x)=\lim_{n \to \infty}\ord(x^n)/n$, then $\bar{v}$ is a well-defined function from $S$ to $\mathbb{R}\cup\{\infty\}$ called the asymptotic Samuel function. Its value at any element of $S$ is either a nonnegative rational number or $\infty$. Then we have the following upper bound of $e(S)$:
\begin{theoremB*}[See \Cref{thm: maintheoremB}]
Assume $S$ has equal characteristic, $I$ is an $S$-ideal which is strongly Lech-independent and $l(S/I)<\infty$. Let $d=\dim S$. Assume $I$ is minimally generated by $(x_1,...,x_r)$, $\bar{v}(x_i)=q_i$ where $q_1 \leq q_2 \leq ... \leq q_r$. Then $e(S) \leq e(I)/q_1...q_{d-1}q_d$.
\end{theoremB*}

The paper is organized in the following way. In \Cref{se:notation} we introduce some notations for this paper. In \Cref{se:standardset}, we start with the definition of a standard set, along with some basic definitions and properties on the set of monomials in a polynomial ring. In \Cref{se:SLI definition}, we define strong Lech-independence and expansion properties and prove some equivalent conditions. We will prove that strong Lech-independence implies certain expansion property. In \Cref{se:Maintheorem}, we use strong Lech-independence to analyze the colengths of powers of ideals and derive inequalities on multiplicities.

\section{Preliminaries, notations and assumptions}\label{se:notation}
In this section, we set up some notations and assumptions that will be used throughout this paper. We will also mention some preliminary results.

In this paper, we make the following assumptions:
\begin{assumption}\label{assumption2.1}
We assume $(S,\mathfrak{n})$ is a Noetherian local ring with maximal ideal $\mathfrak{n}$. We assume $I$ is an $S$-ideal with minimal generators $x_1,x_2,\ldots,x_r$, and $r$ is the number of minimal generators of $I$.    
\end{assumption}

Apart from \Cref{assumption2.1}, we also use the following notations. The symbol $I^i,i \in \mathbb{N}_+$ denotes the $i$-th ordinary power of $I$, $I^0$ denotes the unit ideal $S$, and $I^\infty$ denotes the zero ideal $(0)$. We assume $k$ is a field, $P$ is a polynomial ring over $k$. The variables of $P$ are $T_1,T_2,\ldots,T_n$. The indeterminates of a multigraded Hilbert series are denoted by $z_1,z_2,\ldots$, and the indeterminate of a $\mathbb{Z}$-graded or $\mathbb{N}$-graded Hilbert series is denoted by $z$. We will use lower case letters for sequences of elements except for the sequence $\underline{z}=(z_1,z_2,\ldots)$ which is used in the notation of multigraded Hilbert series. In particular, we write $x=(x_1,x_2,\ldots,x_r)$, which is a minimal generating sequence of $I$. The cardinality of a finite set $A$ is denoted by $|A|$.

We will assume $k$ is a general field in \Cref{se:standardset} and $k=S/\mathfrak{n}$ is the residue field of $S$ in \Cref{se:SLI definition} and \Cref{se:Maintheorem}. Therefore, the results in \Cref{se:standardset} for a general $k$ will apply to the corresponding results in \Cref{se:SLI definition} and \Cref{se:Maintheorem}. We use the symbol $\dim$ for the Krull dimension of rings and modules, and $\dim_k$ for the vector space dimension over $k$. The symbol $l(N)$ represents the length of an $S$-module $N$. If $S$ has a coefficient field $k$ and $N$ has finite length, then $l(N)=\dim_k(N)<\infty$.

For a monomial $u$ in the polynomial ring $P=k[T_1,\ldots,T_n]$, we can express $u$ in terms of products of variables, that is, $u=T_1^{a_1}T_2^{a_2}\ldots T_n^{a_n}$. To avoid expressing $u$ explicitly, we will define $\deg_{T_i}u=a_i$ for $1 \leq i \leq n$. For a sequence $y=(y_1,\ldots,y_n)$ of $S$-elements whose length is equal to the number of variables in $P$, the expression $y_1^{a_1}y_2^{a_2}\ldots y_n^{a_n}=y_1^{\deg_{T_1}u}y_2^{\deg_{T_2}u}\ldots y_n^{\deg_{T_n}u} \in S$ is the evaluation of $u$ at $y=(y_1,\ldots,y_n)$, denoted by $u(y)$. The most commonly used example of this notation is the case when $n=r$ and $y=x=(x_1,\ldots,x_r)$. In this case $u(x)=x_1^{\deg_{T_1}u}x_2^{\deg_{T_2}u}\ldots x_r^{\deg_{T_r}u}$. If $n=r$ and $y=(z^{t_1},z^{t_2},\ldots,z^{t_r})$, then $u(y)=z^{t_1a_1+t_2a_2+\ldots+t_ra_r}$ where $a_i=\deg_{T_i}u$.

We identify $\mathbb{R}[z]_{(z)}$ as a subring of $\mathbb{R}[[z]]$ through the natural embedding $\mathbb{R}[z]_{(z)} \subset \mathbb{R}[[z]]$. For a power series $a(z) \in \mathbb{R}[[z]]$ that falls into $\mathbb{R}[z]_{(z)}$, it is a rational function where $z=0$ is not a pole, and in this case we also view it as a meromorphic function on $\mathbb{C}$ and ignore its radius of convergence. With this understanding, we have:
\begin{lemma}\label{lem: residueleadingterm}Let $d$ be a positive integer, $a(z) = \sum_{i \geq 0} a_iz^i \in \mathbb{R}[z]_{(z)}$ be a rational series satisfying the following properties:
\item (P1) $a(z)$ only has poles at roots of unity;

\item (P2$_d$) $z = 1$ is a pole of $a(z)$ with order $d$;

\item (P3$_d$) The orders of poles of $a(z)$ except for $1$ are less than $d$.

Then we have
\begin{align}\label{equation2.1}
\lim_{z \to 1}(\sum_{i \geq 0} a_iz^i)(1-z)^d=\lim_{k \to \infty}\frac{(d-1)!}{(d+k-1)!}\frac{\partial^{k}a(0)}{\partial z^{k}}\\
=\lim_{k \to \infty}\frac{(d-1)!k!}{(d+k-1)!}a_k
=\lim_{k \to \infty}\frac{(d-1)!}{k^{d-1}}a_k.    
\end{align}
The first limit is understood by viewing $\sum_{i \geq 0} a_iz^i$ as a meromorphic function on $\mathbb{C}$. In particular, if $a(z)$ has a single pole at $z=1$ of order $d$, then $a_k=Ck^{d-1}+o(k^{d-1})$ for some $C \neq 0$.
\end{lemma}
\begin{proof}We prove \Cref{equation2.1} first. We can express $a(z)$ using partial-fraction decomposition; the existence of a partial-fraction decomposition of a rational function can be seen in, for example, Section 2.1.4 of \cite{martin1966complex}. To be precise, let $U$ be the set of poles of $a(z)$, then there exist finitely many complex numbers $e_{i,\xi}, 1 \leq i \leq d-1, \xi \in U$, a complex number $e_0 \neq 0$, and a polynomial $b(z)$ such that
\begin{equation}\label{equation2.3}
a(z)=\sum_{1 \leq i \leq d-1, \xi \in U}e_{i,\xi}(\xi-z)^{i-d}+e_0(1-z)^{-d}+b(z).
\end{equation}
Let $L$ be the map $a(z) \to \lim_{k \to \infty}\frac{(d-1)!}{(d+k-1)!}\frac{\partial^{k}a(0)}{\partial z^{k}}$. Then it is $\mathbb{Q}$-linear when it is well-defined. We apply $L$ to each term in the right side of \Cref{equation2.3}. If $1 \leq i \leq d-1$,
$$L((\xi-z)^{i-d}) = \lim_{k \to \infty} \frac{(d-1)!(d-i+k-1)!}{(d-i-1)!(d+k-1)!}(\xi-0)^{i-d-k} = 0$$
as $(\xi-0)^{i-d-k}$ is bounded and $\frac{(d-1)!(d-i+k-1)!}{(d-i-1)!(d+k-1)!}$ goes to $0$,
$$L((1-z)^{-d}) = \lim_{k \to \infty} \frac{(d-1)!(d+k-1)!}{(d-1)!(d+k-1)!}(1-0)^{i-d-k} = 1,$$
and $L(b(z)) = 0$ as $b(z)$ is a polynomial and $d>0$. This means the right side of \Cref{equation2.1} is $L(a(z)) = e_0$. The left side of \Cref{equation2.1} is also $e_0$, so they are equal. So the first equality is proved. In particular $e_0$ is a real number. The second equality is true since $\frac{\partial^{k}a(0)}{\partial z^{k}}=k!a_k$ and the third equality is true since for fixed $d \geq 1$, $\lim_{k \to \infty}\frac{k!k^{d-1}}{(d+k-1)!}=\lim_{k \to \infty}\Pi_{1 \leq i \leq d-1}(1+\frac{i}{k})=1$. Finally if $a(z)$ has a single pole at $z=1$ of order $d$, then it satisfies (P1), (P2$_d$), and (P3$_d$), so $\lim_{k \to \infty}\frac{(d-1)!}{k^{d-1}}a_k$ exists.
\end{proof}
If $l(S/I)<\infty$, the Hilbert Samuel multiplicity of $I$ is
$$\lim_{t \to \infty}\frac{(d-1)!l(I^t/I^{t+1})}{t^{d-1}}=\lim_{t \to \infty}\frac{d!l(S/I^t)}{t^d}=e(I).$$
We say $e(\mathfrak{n})=e(S)$ is the multiplicity of $S$.

\section{Standard sets in a polynomial ring}\label{se:standardset}
Let $k$ be a field, $n$ be a positive integer, $P=k[x_1,\ldots,x_n]$. In this section, we will define the concept of a standard set which is a certain kind of subset of $P$. This concept will be used in \Cref{se:SLI definition} and \Cref{se:Maintheorem}.

\begin{definition}\label{def: standardset}An ideal $J$ of $P$ is called a \emph{monomial ideal}, if $J$ is generated by monomials. A set of monomials $\Gamma$ is called a \emph{standard set of monomials}, or a standard set for short, if $\Gamma$ is a subset of monomials in $P$ such that if $u$ is in $\Gamma$, then every monomial dividing $u$ is in $\Gamma$.
\end{definition}
Let $\Mon(\cdot)$ be the set of all the monomials in a polynomial ring or a monomial ideal. For a standard set $\Gamma$, let $\Gamma_i$ be the monomials of degree $i$ in $\Gamma$. A standard set is closed under taking factors, hence its complement is closed under taking multiples, which means that the complement is just the set of all monomials in a monomial ideal. Hence we have:
\begin{proposition}\label{prop: standardset=mon ideal}$\Gamma$ is a standard set if and only if for some monomial ideal $I_{\Gamma}$, $\Mon(P)\backslash \Gamma=\Mon(I_{\Gamma})$. This builds a bijection between the set of standard sets and the set of monomial ideals in $P$.
\end{proposition}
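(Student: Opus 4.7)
The plan is to unfold both sides of the biconditional using the standard fact that a monomial $m$ lies in a monomial ideal $I$ if and only if $m$ is divisible by one of the monomial generators of $I$. Given this, both directions become a direct translation between closure under divisors (which defines a standard set) and closure under multiples (which characterizes the monomial part of a monomial ideal).

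For the forward direction, given a standard set $\Gamma$, I would set $I_\Gamma$ to be the ideal generated by $Mon(P)\setminus\Gamma$. By construction $I_\Gamma$ is a monomial ideal, and the inclusion $Mon(I_\Gamma) \supseteq Mon(P)\setminus\Gamma$ is immediate. For the reverse inclusion, if a monomial $m \in I_\Gamma$ were in $\Gamma$, then $m$ would be divisible by some $u \in Mon(P)\setminus\Gamma$, and divisor-closure of $\Gamma$ would force $u \in \Gamma$, a contradiction.

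For the reverse direction, suppose $Mon(P)\setminus\Gamma = Mon(I)$ for some monomial ideal $I$. To verify that $\Gamma$ is closed under taking divisors, I would take $u \in \Gamma$ and any divisor $v \mid u$; if $v \notin \Gamma$, then $v \in Mon(I) \subseteq I$, so $u = v\cdot(u/v) \in I$ is a monomial of $I$, contradicting $u \in \Gamma$. The bijection claim then amounts to checking that the assignments $\Gamma \mapsto I_\Gamma$ and $I \mapsto Mon(P)\setminus Mon(I)$ are mutually inverse, which reduces to the observation that a monomial ideal equals the ideal generated by its monomials.

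The whole argument is a routine unwinding of definitions, so there is no real obstacle. The only point worth isolating is the characterization of monomials in a monomial ideal used in the first paragraph, which the author presumably takes as known; it is proved by expanding an element of $I$ as a polynomial combination of the monomial generators and observing that if the result itself is a monomial then it must equal one of the summands, hence be divisible by one of the generators.
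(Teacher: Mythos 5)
Your proof is correct and follows the same idea the paper uses, namely that divisor-closure of $\Gamma$ is equivalent to multiple-closure of its complement, which characterizes the monomial set of a monomial ideal; the paper merely asserts this in one sentence, while you spell out both inclusions and the mutual-inverse check. No issues.
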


Next we derive some data of the multigraded ring $P/I_\Gamma$ from $\Gamma$ where $\Gamma$ is a standard set. Here the multigraded Hilbert function and multigraded Hilbert series are defined in \cite{SturmfelsCA}, Definition 1.10 and Definition 8.14; the reader can refer to Chapters 1 and 8 in \cite{SturmfelsCA} for relevant knowledge on multigraded Hilbert series.
\begin{proposition}\label{prop: standardset Hilbertseries}Let $\Gamma \subset \Mon(P)$ be a standard subset. 
\begin{enumerate}
\item The multigraded Hilbert series of $P/I_\Gamma$ is $HS_{P/I_\Gamma}(\underline{z})=\sum_{u \in \Gamma} u(\underline{z}).$
\item The Hilbert series of $P/I_\Gamma$ is $HS_{P/I_\Gamma}(z)=HS_{P/I_\Gamma}(z,z,...,z)$.
\item The Krull dimension $d$ of $P/I_\Gamma$ is the order of $HS_{P/I_\Gamma}(z)$ at the pole $z=1$; the multiplicity of $P/I_\Gamma$ is $\lim_{z \to 1}HS_{P/I_\Gamma}(z)(1-z)^d$.
\end{enumerate}
\end{proposition}
\begin{proof}
(1) just comes from the definition of multigraded Hilbert series since for any multi-index $a=(a_1,\ldots,a_n)$, $\dim_k (P/I_\Gamma)_a=1$ if $T_1^{a_1}\ldots T_n^{a_n} \in \Gamma$ and $\dim_k (P/I_\Gamma)_a=0$ otherwise. (2) comes from Definition 1.10 of \cite{SturmfelsCA}. The first part of (3) is a consequence of the equality $\dim M=d(M)=\delta(M)$; for the meaning of these symbols and the proof see Theorem 13.4 of \cite{Matsumuracommutativering}. Here $d(M)$ is the degree of the Hilbert-Samuel polynomial, and it is equal to the order of poles of the corresponding Hilbert series at $z=1$ by \Cref{lem: residueleadingterm}. The second part is true by \Cref{lem: residueleadingterm}.   
\end{proof}
Sometimes we only care about the standard set $\Gamma$, not the monomial ideal $I_{\Gamma}$. So we make the following convention.
\begin{definition}\label{def: standardsetHilbert}Let $\Gamma$ be a standard set in a polynomial ring $P$. We define the multigraded Hilbert series, the Hilbert series, dimension and multiplicity of $\Gamma$ to be that of $P/I_\Gamma$.
\end{definition}
In general, $\Gamma$ is an infinite set, but there is a way to express it in terms of finitely many monomials and finitely many polynomial subrings.
\begin{proposition}[\cite{vasconcelos2004computational}, Stanley decomposition]\label{prop: Stanleydecomp} For each standard set $\Gamma$, there exists a finite set of pairs $(u_i,S_i)_{i \in \Lambda}$ where every $u_i$ is a monomial in $\Gamma$ and every $S_i$  is a subset of variables such that $P/I_{\Gamma}=\oplus_{i \in \Lambda} u_ik[S_i]$ as a $k$-vector space. In this case, $\Gamma$ is the disjoint union of $u_i \cdot \Mon(k[S_i])$ where $i \in \Lambda$.
\end{proposition}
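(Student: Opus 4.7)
The plan is induction on $r$, the number of variables of $P$. For $r = 0$, $P = k$ and $\Gamma$ is either empty or $\{1\}$, both of which admit trivial decompositions.

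For the inductive step, I would sort the monomials of $P$ by their exponent of $T_r$. For each $j \geq 0$ let $J_j := \{m \in k[T_1,\ldots,T_{r-1}] : T_r^j m \in I_\Gamma\}$, a monomial ideal of $k[T_1,\ldots,T_{r-1}]$; the $J_j$ form an ascending chain and so stabilize at some index $a$ by Noetherianity. A monomial $T_r^j m$ of $P$ lies in $\Gamma$ precisely when $m$ lies in the standard set $\Gamma^{(j)}$ complementary to $J_j$, so $\Gamma$ partitions disjointly into the levels $T_r^j \cdot \Gamma^{(j)}$ for $0 \leq j \leq a-1$ together with the stable tail $T_r^a \cdot \Gamma^{(a)} \cdot Mon(k[T_r])$.

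By the inductive hypothesis, each $\Gamma^{(j)}$ for $0 \leq j \leq a$ admits a finite Stanley decomposition $\Gamma^{(j)} = \bigsqcup_l v_l^{(j)} \cdot Mon(k[S_l^{(j)}])$ with $S_l^{(j)} \subseteq \{T_1,\ldots,T_{r-1}\}$. Assembling these gives the desired Stanley decomposition of $\Gamma$: take the pairs $(T_r^j v_l^{(j)}, S_l^{(j)})$ for $0 \leq j \leq a-1$ together with $(T_r^a v_l^{(a)}, S_l^{(a)} \cup \{T_r\})$ for the tail. Since every monomial of $P/I_\Gamma$ appears in exactly one summand, the monomial identity promotes directly to the $k$-vector space decomposition $P/I_\Gamma = \bigoplus u_i k[S_i]$.

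The argument is mostly bookkeeping once the filtration is in place; the key points to verify are that the chain $J_j$ stabilizes (resting on Noetherianity of $k[T_1,\ldots,T_{r-1}]$) and that the level pieces reassemble disjointly to cover $\Gamma$, which holds by construction. I expect the only genuine obstacle to be writing the tail step carefully enough to handle the edge case $a = 0$, in which no pre-stable pieces appear and the entire $\Gamma$ is produced by the single batch $(v_l^{(0)}, S_l^{(0)} \cup \{T_r\})$.
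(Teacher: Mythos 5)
Your proposal is correct. Note that the paper does not actually prove this proposition: it only cites Vasconcelos [15] for the existence of Stanley decompositions, so there is no in-paper argument to compare against. Your induction on the number of variables is one of the standard self-contained proofs: the colon sets $J_j=(I_\Gamma : T_r^j)\cap k[T_1,\ldots,T_{r-1}]$ are indeed monomial ideals forming an ascending chain, stabilization at some $a$ gives the partition of $\Gamma$ into the finitely many levels $T_r^j\cdot\Gamma^{(j)}$ for $j<a$ plus the stable tail $T_r^a\cdot\Gamma^{(a)}\cdot Mon(k[T_r])$, and applying the inductive hypothesis to each $\Gamma^{(j)}$ and adjoining $T_r$ to the stable pieces assembles a finite Stanley decomposition; since the images of the monomials of $\Gamma$ form a $k$-basis of $P/I_\Gamma$, the disjoint monomial partition does promote to the direct-sum statement, and your treatment of the $a=0$ edge case is fine. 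The only cosmetic quibble is that $J_j$ should be described from the start as the monomial ideal whose monomials are those $m$ with $T_r^jm\in I_\Gamma$ (equivalently the displayed colon-and-contract construction), which is what you in effect use.
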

We call such a partition of $\Gamma$ a \emph{Stanley decomposition} of $\Gamma$, denoted by $(u_i,S_i)_{i \in \Lambda}$. we also have the following proposition of the Stanley decomposition.
\begin{proposition}[\cite{vasconcelos2004computational}]\label{prop: Stanleyconsequence}Let $\Gamma$ be a standard set with Stanley decomposition $(u_i,S_i)_{i \in \Lambda}$. Then the multigraded Hilbert series of $\Gamma$ is $\sum_{i \in \Lambda} \frac{u_i(\underline{z})}{\Pi_{T_j \in S_i}(1-z_j)}$. The dimension $d$ of $\Gamma$ is $\max{|S_i|}$. The multiplicity of $\Gamma$ is the number of $i$ such that $|S_i|=d$.
\end{proposition}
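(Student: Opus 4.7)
The plan is to leverage the disjoint decomposition $\Gamma = \bigsqcup_i u_i \cdot Mon(k[S_i])$ coming from the Stanley decomposition, and to split the defining sum $HS_{P/I_\Gamma}(\underline{z}) = \sum_{u \in \Gamma} u(\underline{z})$ into contributions from each Stanley piece. Since the pieces partition $\Gamma$, the sum rearranges as $\sum_i u_i(\underline{z}) \cdot \sum_{v \in Mon(k[S_i])} v(\underline{z})$. The inner sum is the multigraded Hilbert series of the polynomial subring $k[S_i]$, and by expanding each variable separately as a geometric series I would identify it with $\prod_{T_j \in S_i} (1-z_j)^{-1}$. This produces the claimed rational expression.

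For the dimension and multiplicity, I would specialize $z_j = z$ for every $j$, so each summand becomes $u_i(z,\ldots,z)/(1-z)^{|S_i|}$. Because $u_i$ is a monomial, $u_i(1,\ldots,1) = 1$, hence this summand has a pole of order exactly $|S_i|$ at $z = 1$ with leading Laurent coefficient $1$. Consequently the pole order of $HS_{P/I_\Gamma}(z)$ at $z=1$ equals $d = \max_i |S_i|$, giving the dimension. Multiplying through by $(1-z)^d$ and letting $z \to 1$, every summand with $|S_i| < d$ contributes $0$ and every summand with $|S_i| = d$ contributes $1$, which yields the multiplicity $\#\{i : |S_i| = d\}$.

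The one step that deserves explicit attention is the non-cancellation claim in the previous paragraph: a priori, adding rational functions with different pole orders could conspire to lower the order of the pole at $z=1$. This does not happen here because the leading Laurent coefficients of the top-order terms are all equal to $1$, so their sum is a positive integer. Apart from this mild observation, the argument is essentially a bookkeeping exercise combining the Stanley decomposition with the geometric-series formula, so no serious obstacle is expected.
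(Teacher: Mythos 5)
Your proof is correct. The paper itself gives no argument for this proposition — it simply cites Vasconcelos [15] — and your computation is exactly the standard one that reference supplies: split the formal sum $\sum_{u\in\Gamma}u(\underline{z})$ along the disjoint pieces $u_i\cdot Mon(k[S_i])$, sum the geometric series to get $\sum_i u_i(\underline{z})/\prod_{T_j\in S_i}(1-z_j)$, then specialize $z_j=z$ and read off the pole order and the limit $\lim_{z\to 1}HS(z)(1-z)^d$ against Definition 2.3. Your explicit remark about non-cancellation is the one point that genuinely needs saying, and your justification (each top-order summand $z^{\deg u_i}/(1-z)^{d}$ contributes leading coefficient $1>0$, so the leading coefficients add to a positive integer) settles it; note this works precisely because the paper defines dimension and multiplicity of $\Gamma$ via the pole at $z=1$, so no comparison with Krull dimension is required.
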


\section{strong Lech-independence and expansion property}\label{se:SLI definition}
In this section, we assume $k$ is the residue field of $S$, $P=k[T_1,...,T_r]$ is the polynomial ring over $k$ in exactly $r$ variables where $r$ is the number of minimal generators of $I$. We will define a property on the ideal $I$ called strong Lech-independence which is the key concept in the proofs in \Cref{se:Maintheorem} that allows us to deduce inequalities on multiplicities of ideals. This concept is a generalization of Lech-independence defined in \cite{lech1964inequalities}:
\begin{definition}\label{def: SLI}We say that $I$ is \emph{Lech-independent} if $I/I^2$ is free over $S/I$. We say that $I$ is \emph{strongly Lech-independent} if $I^i/I^{i+1}$ is free over $S/I$ for any $i \geq 1$. We say that a sequence of elements $x_1,...,x_r$ is Lech-independent (resp. strongly Lech-independent), if it forms a minimal generating set of an ideal which is Lech-independent (resp. strongly Lech-independent).
\end{definition}
We would like to point out that the concept of permissible ideal defined in page 33 in \cite{permissibleref} is equivalent to $I$ being a strongly Lech-independent prime ideal and $S/I$ being regular.

We have the following equivalent conditions for strong Lech-independence.
\begin{proposition}\label{prop: SLI=grISfree}The following are equivalent for $I$.
\begin{enumerate}
\item $I$ is strongly Lech-independent.

\item $\gr_I(S)$ is free over $S/I$.

\item $\gr_I(S)$ is flat over $S/I$.
\end{enumerate}
\end{proposition}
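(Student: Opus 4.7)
The plan is to prove the cyclic implications $(1) \Rightarrow (2) \Rightarrow (3) \Rightarrow (1)$. The first two are essentially formal, while $(3) \Rightarrow (1)$ requires the standard Noetherian-local trick to upgrade flatness to freeness on each graded piece.

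For $(1) \Rightarrow (2)$, I would simply note that $gr_I(S) = \bigoplus_{i \geq 0} I^i/I^{i+1}$ is by construction a direct sum of $S/I$-modules (each graded component is killed by $I$), so a direct sum of free modules is free. The implication $(2) \Rightarrow (3)$ is immediate because free modules are flat.

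The core of the proof is $(3) \Rightarrow (1)$. The key observation is that the graded decomposition $gr_I(S) = \bigoplus_{i \geq 0} I^i/I^{i+1}$ is a direct sum decomposition not only of abelian groups but of $S/I$-modules. Thus each graded piece $I^i/I^{i+1}$ is a direct summand of $gr_I(S)$ in the category of $S/I$-modules. Since any direct summand of a flat module is flat, each $I^i/I^{i+1}$ is flat over $S/I$. Now $S$ is Noetherian, so $I^i$ is finitely generated and therefore $I^i/I^{i+1}$ is finitely generated as an $S$-module, hence as an $S/I$-module. Moreover, $S/I$ inherits the property of being Noetherian and local from $S$. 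We may then invoke the standard fact that a finitely generated flat module over a Noetherian local ring is free to conclude that $I^i/I^{i+1}$ is free over $S/I$ for every $i$, which is (1).

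The only potential obstacle is the direct-summand step, since one needs to be careful that the summands are $S/I$-submodules rather than merely graded pieces over the graded ring $gr_I(S)$; this is transparent once one remembers that $I \cdot I^i \subseteq I^{i+1}$, so the $S$-action on $I^i/I^{i+1}$ factors through $S/I$. Everything else is a formal application of standard module-theoretic facts.
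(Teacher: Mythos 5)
Your proposal is correct and follows essentially the same route as the paper: the easy implications $(1)\Rightarrow(2)\Rightarrow(3)$ are treated as formal, and $(3)\Rightarrow(1)$ is obtained by observing that each $I^i/I^{i+1}$ is an $S/I$-direct summand of $gr_I(S)$, hence flat, and then invoking the fact that a finitely generated flat module over the Noetherian local ring $S/I$ is free. Your extra care in checking that the $S$-action on $I^i/I^{i+1}$ factors through $S/I$ is a worthwhile explicit detail the paper leaves implicit.
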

\begin{proof}It suffices to prove $(3) \Rightarrow (1)$. If $\gr_I(S)$ is flat over $S/I$, then for any $i \geq 1$, $I^i/I^{i+1}$ is flat over $S/I$ because it is a direct summand of $\gr_I(S)$. But it is finitely generated over the local ring $S/I$, so it is free. So $I$ is strongly Lech-independent by definition.
\end{proof}
Next we introduce an expansion property for a sequence of elements of the ring $S$ and relate it to strong Lech-independence.
\begin{definition}\label{def: lifting}We say a set-theoretic map $\sigma:S/I \to S$ is a \emph{lifting which preserves 0}, or a lifting for short, if $\sigma(0)=0$ and the composition of $\sigma$ with the natural quotient map $\pi: S \to S/I$ is the identity map on $S/I$.
\end{definition}
Roughly speaking, $\sigma$ picks a representative for each coset in $S/I$ and picks $0$ element as the representative of $0$ coset. Such liftings always exist by axiom of choice. Here we present an example to produce such a lifting in finite steps.
\begin{example}\label{eg: klinearlifting}
Suppose $S$ has a coefficient field $k$, which implies that $S/I$ is a $k$-vector space. Choose a $k$-basis of $S/I$, then they are of the form $\{f_i+I\},i \in \Lambda$ where $f_i \in S$ for any $i$. We fix a choice of such $f_i$. Expanding the set-theoretic map $f_i+I \to f_i$ $k$-linearly gives a map $\sigma: S/I \to S$. Then $\sigma$ is a well-defined lifting.    
\end{example}
\begin{proof}
The well-definedness comes from the fact that any set-theoretic map from a basis of a $k$-vector space to another $k$-vector space extends to a $k$-linear map between the two spaces. As a $k$-linear map, $\sigma$ maps $0$ to $0$. Also, $\pi$ is a $k$-linear map, so $\pi\sigma$ is a $k$-linear map. By definition, the restriction of $\pi\sigma$ on the basis element is the identity map, thus $\pi\sigma$ is the identity $k$-linear map on $S/I$.    
\end{proof}
\begin{definition}[Expansion property]\label{def: expansionprop}
We fix two nonnegative integers $i<j$ and a subset $\Gamma$ of $\Mon(P)$. Assume $I$ and $x=(x_1,\ldots,x_r)$ are as in \Cref{assumption2.1}. 
\begin{enumerate}
\item We say $(x_1,...,x_r)$ is \emph{$\Gamma$-expandable from degree $i$ to $j$}, if for any lifting $\sigma:S/I \to S$, every element $f \in I^i$ has a representation
$$f=\sum_{u \in \Gamma_m, i \leq m \leq j-1} f_{u}u(x)+g$$
such that for any $u$, $f_{u} \in \sigma(S/I)$ and $g \in I^j$, and the choice of $f_u$ and $g$ making the equality hold is unique.
\item If $S$ is complete, we say that $(x_1,...,x_r)$ is $\Gamma$-expandable from degree $i$ to $\infty$, if for any lifting $\sigma:S/I \to S$, every element $f \in I^i$ has a representation
$$f=\sum_{u \in \Gamma_m, i \leq m} f_{u}u(x)$$
such that for any $u$, $f_{u} \in \sigma(S/I)$, and the choice of $f_u$ making the equality hold is unique. 
\item We say that $(x_1,...,x_r)$ is $\Gamma$-expandable if it is expandable from degree 0 to $\infty$. 
\item The two expressions $f=\sum_{u \in \Gamma_m, i \leq m \leq j-1} f_{u}u(x)+g$ and $f=\sum_{u \in \Gamma_m, i \leq m} f_{u}u(x)$ in (1) and (2) are called the expansion of $f$ with respect to $\Gamma$ and the lifting $\sigma$, or simply the expansion of $f$ if $\Gamma$ and $\sigma$ are clear.
\item We say an ideal is $\Gamma$-expandable from degree $i$ to $j$ or $\infty$ if one minimal generating sequence of the ideal is $\Gamma$-expandable from degree $i$ to $j$ or $\infty$.
\end{enumerate}
\end{definition}
We stress that when $I$ is $\Gamma$-expandable from degree $i$ to $\infty$, the ambient ring $S$ is implicitly assumed to be complete, otherwise this notion does not make sense as the infinite sum may not converge to an element in $S$.

Strong Lech-independence can be described using the expansion property. We start with two lemmas:
\begin{lemma}\label{lem: expansioni1i2i3}Let $i_1,i_2$ be nonnegative integers, and $i_3$ is either a positive integer or $\infty$ such that $i_1 < i_2 < i_3$. Consider 3 conditions on a sequence $(x_1,...,x_r)$.
\begin{enumerate}
\item $(x_1,...,x_r)$ is $\Gamma$-expandable from degree $i_1$ to $i_2$

\item $(x_1,...,x_r)$ is $\Gamma$-expandable from degree $i_1$ to $i_3$

\item $(x_1,...,x_r)$ is $\Gamma$-expandable from degree $i_2$ to $i_3$
\end{enumerate}
Then two of them imply the third one.
\end{lemma}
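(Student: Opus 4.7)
The plan is to prove all three implications using the same small set of moves on expansions: truncating an expansion modulo $I^{i_2}$, concatenating a (1)-expansion of $f$ with a (3)-expansion of its remainder, and---most delicately---padding a (1)-expansion up to a (2)-expansion using (3). A single fact gets invoked throughout: under any of the expansion properties, the unique expansion of $0$ has all coefficients equal to $0$, because the trivial sum is itself a valid expansion with coefficients in $\sigma(S/I)$ (since $\sigma(0)=0$) and uniqueness must pick it.

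For the three existence statements: to derive (2) from (1) and (3), given $f \in I^{i_1}$ I would use (1) to write $f \equiv \sum_{u \in \Gamma_k,\, i_1 \le k \le i_2-1} f_u u(x) \pmod{I^{i_2}}$, apply (3) to the remainder $f - \sum f_u u(x) \in I^{i_2}$, and concatenate the two sums. To derive (3) from (1) and (2), given $f \in I^{i_2} \subset I^{i_1}$ I would take its (2)-expansion and reduce modulo $I^{i_2}$: the low-degree part is then a (1)-expansion of $0$, which the observation forces to be trivial, leaving the tail as the desired (3)-expansion. To derive (1) from (2) and (3), I would simply truncate the (2)-expansion modulo $I^{i_2}$.

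For the uniqueness halves, (1)+(3) $\Rightarrow$ (2) follows by reducing two competing (2)-expansions modulo $I^{i_2}$: uniqueness of (1) pins down the low-degree coefficients, and subtracting these off turns the tails into two (3)-expansions of the same element of $I^{i_2}$, to which uniqueness of (3) applies. For (1)+(2) $\Rightarrow$ (3), two (3)-expansions of $f \in I^{i_2}$ are also (2)-expansions after padding with zero low-degree coefficients, and uniqueness of (2) yields equality. The remaining case (2)+(3) $\Rightarrow$ (1) requires one extra move: given two (1)-expansions $\sum h_u u(x)$ and $\sum h'_u u(x)$ of $f$ modulo $I^{i_2}$, their actual sums in $S$ differ by an element of $I^{i_2}$, so (3) furnishes a tail $\sum_{i_2 \le k \le i_3-1} e_u u(x)$ representing that difference modulo $I^{i_3}$; appending this tail to the second sum produces a genuine (2)-expansion of the same element of $I^{i_1}$ as the first sum, and uniqueness of (2) then forces $h_u = h'_u$ and $e_u = 0$.

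The main obstacle is precisely this last uniqueness step: a (1)-expansion is an equality modulo $I^{i_2}$, not in $S$, so one cannot directly compare two of them. Condition (3) is exactly the tool that lifts such equalities to equalities modulo $I^{i_3}$, where the uniqueness in (2) becomes available. Everything else is bookkeeping among the three indices $i_1 < i_2 < i_3$.
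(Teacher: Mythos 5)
Your proposal is correct and follows essentially the same route as the paper: existence by truncation or concatenation, and uniqueness by using (3) to lift a congruence modulo $I^{i_2}$ to one modulo $I^{i_3}$ where the uniqueness in (2) applies, together with the observation that the zero expansion is the unique expansion of $0$. The only deviation is cosmetic: in the case (2)+(3) $\Rightarrow$ (1) you compare two arbitrary (1)-expansions via their difference, while the paper compares an arbitrary one against the truncated (2)-expansion of $f$; the underlying mechanism is identical.
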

\begin{proof}Let $I=(x_1,...,x_r)$.

Assume (1) and (2) are true, then for any $f \in I^{i_2} \subset I^{i_1}$, by (2) we have
$$f=\sum_{u \in \Gamma_m, i_1 \leq m \leq i_3-1} f_{u}u(x)+f_{i_3},f_{i_3} \in I^{i_3}.$$
Let
$$f'=\sum_{u \in \Gamma_m, i_1 \leq m \leq i_2-1} f_{u}u(x),$$
then $f'-f \in I^{i_2}$, so $f' \in I^{i_2}$. By (1) the unique expansion of $f'$ modulo $I^{i_2}$ is itself. So $f'=f_u=0$ for all $u \in \Gamma_m, i_1 \leq m \leq i_2-1$ and hence we have
$$f=\sum_{u \in \Gamma_m, i_2 \leq m \leq i_3-1} f_{u}u(x).$$
This shows the existence. The uniqueness just follows from (2) because an expansion from degree $i_2$ to $i_3$ can be viewed as an expansion from degree $i_1$ to $i_3$ by adding $0$ terms.

Assume (1) and (3) are true. Let $f \in I^{i_1}$, then by (1)
$$f=\sum_{u \in \Gamma_m, i_1 \leq m \leq i_2-1} f_{u}u(x)+g,$$
where $g \in I^{i_2}$. By (3),
$$g=\sum_{u \in \Gamma_m, i_2 \leq m \leq i_3-1} g_{u}u(x)+h,$$
where $h \in I^{i_3}$. Thus
$$f=\sum_{u \in \Gamma_m, i_1 \leq m \leq i_2-1} f_{u}u(x)+\sum_{u \in \Gamma_m, i_2 \leq m \leq i_3-1} g_{u}u(x)+h$$
is a representation of $f$. This shows the existence. For uniqueness, let
$$\sum_{u \in \Gamma_m, i_1 \leq m \leq i_2-1} f'_{u}u(x)+\sum_{u \in \Gamma_m, i_2 \leq m \leq i_3-1} g'_{u}u(x)+h'$$
be another representation of $f$ where $h' \in I^{i_3}$. Then 
$$f=\sum_{u \in \Gamma_m, i_1 \leq m \leq i_2-1} f_{u}u(x)+(\sum_{u \in \Gamma_m, i_2 \leq m \leq i_3-1} g_{u}u(x)+h)$$
$$=\sum_{u \in \Gamma_m, i_1 \leq m \leq i_2-1} f'_{u}u(x)+(\sum_{u \in \Gamma_m, i_2 \leq m \leq i_3-1} g'_{u}u(x)+h').$$
Both are expansions of $f$ from degree $i_1$ to $i_2$. Hence by (1), $f_u=f'_u$ for any $u \in \Gamma_m, i_1 \leq m \leq i_2-1$, and
$$\sum_{u \in \Gamma_m, i_2 \leq m \leq i_3-1} g_{u}u(x)+h=\sum_{u \in \Gamma_m, i_2 \leq m \leq i_3-1} g'_{u}u(x)+h'.$$
By (3) $g_u=g'_u$ and $h=h'$, which proves the uniqueness.

Assume (2) and (3) are true. Then for any $f \in I^{i_1}$, by (2)
$$f=\sum_{u \in \Gamma_m, i_1 \leq m \leq i_3-1} f_{u}u(x)+h,h \in I^{i_3}.$$
Then
$$f=\sum_{u \in \Gamma_m, i_1 \leq m \leq i_2-1} f_{u}u(x)+(\sum_{u \in \Gamma_m, i_2 \leq m \leq i_3-1} f_{u}u(x)+h),$$
so the representation exists. Suppose there is another expression $$f=\sum_{u \in \Gamma_m, i_1 \leq m \leq i_2-1} f'_{u}u(x)+g, g \in I^{i_2}.$$
Then by (3)
$$g=\sum_{u \in \Gamma_m, i_2 \leq m \leq i_3-1} g_{u}u(x)+h',h' \in I^{i_3}.$$
So
$$f=\sum_{u \in \Gamma_k, i_1 \leq m \leq i_2-1} f'_{u}u(x)+\sum_{u \in \Gamma_m, i_2 \leq m \leq i_3-1} g_{u}u(x)+h'.$$
Hence $f'_u=f_u$ for any $u \in \Gamma_m, i_1 \leq m \leq i_2-1$ by the uniqueness of (2) which implies
$$g=\sum_{u \in \Gamma_m, i_2 \leq m \leq i_3-1} g_{u}u(x)+h'=\sum_{u \in \Gamma_m, i_2 \leq m \leq i_3-1} f_{u}u(x)+h,$$
so the uniqueness of (1) is proved.
\end{proof}
\begin{lemma}\label{lem: expansionijinfty}Assume $S$ is complete. Let $i$ be an integer. Let $i_1<i_2<...$ be a sequence of integers going to infinity and assume that $i<i_1$. Suppose $(x_1,...,x_r)$ is $\Gamma$-expandable from degree $i$ to $i_j$ for any $j$. Then $(x_1,...,x_r)$ is $\Gamma$-expandable from degree $i$ to $\infty$.
\end{lemma}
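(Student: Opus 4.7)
The plan is to glue the finite expansions at the levels $i'_j$ into a single convergent series, and then use completeness plus Krull's intersection theorem to identify the series with $f$. Write $I=(x_1,\ldots,x_r)$ and fix $f \in I^i$ together with a lifting $\sigma$. For each $j$, the hypothesis yields unique coefficients $f^{(j)}_u \in \sigma(S/I)$, indexed by $u \in \Gamma_k$ with $i \le k < i'_j$, such that $f \equiv \sum f^{(j)}_u\,u(x) \pmod{I^{i'_j}}$.

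The first step will be to show that these coefficients stabilize as $j$ grows. I would take the level-$(j{+}1)$ expansion and reduce it modulo $I^{i'_j}$; the terms $u(x)$ with $\deg u \ge i'_j$ all lie in $I^{i'_j}$ and so vanish, leaving a valid degree-$i$-to-$i'_j$ expansion of $f$. Uniqueness at level $j$ then forces $f^{(j+1)}_u = f^{(j)}_u$ for every $u$ of degree $< i'_j$, so for each monomial $u$ the value $f^{(j)}_u$ is independent of $j$ once $i'_j > \deg u$; call this common value $f_u \in \sigma(S/I)$.

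Next I would form the formal series $g := \sum_{k \ge i}\sum_{u \in \Gamma_k} f_u\,u(x)$. Since $u(x) \in I^k \subset \mathfrak{n}^k$, the partial sums are Cauchy in the $\mathfrak{n}$-adic topology, and completeness of $S$, required by the definition of expandability to infinity, guarantees convergence to some $g \in S$. By construction the partial sum up to degree $i'_j - 1$ differs from $f$ by an element of $I^{i'_j}$, and the remaining tail, being a limit of elements of the closed ideal $I^{i'_j}$, also lies in $I^{i'_j}$, so $f - g \in I^{i'_j}$ for every $j$; Krull's intersection theorem then gives $f - g \in \bigcap_j \mathfrak{n}^{i'_j} = 0$. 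Uniqueness of the infinite expansion follows by the same truncation: any second convergent expansion reduces modulo $I^{i'_j}$ to a finite expansion of the form covered by the hypothesis, and uniqueness at each level forces all coefficients to coincide. The only genuine obstacle is the stabilization step; once the coefficients are pinned down, convergence and Krull intersection finish the argument routinely.
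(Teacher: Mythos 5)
Your proof is correct and follows essentially the same route as the paper's: stabilize the coefficients by truncating a higher-level expansion modulo $I^{i'_j}$ and invoking uniqueness at level $j$, sum the resulting series using completeness, and conclude $f-g\in\bigcap_j I^{i'_j}=0$, with uniqueness again checked modulo each $I^{i'_j}$. Your treatment is slightly more explicit than the paper's (closedness of $I^{i'_j}$ for the tail, Krull intersection for the vanishing step), but the argument is the same.
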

\begin{proof}Let $I=(x_1,...,x_r)$ and take $f \in I^i$. Let
$$f=\sum_{u \in \Gamma_m, i \leq m \leq i_j-1} f_{j,u}u(x)+g_j, g_j \in I^{i_j}.$$
Suppose $j<j'$. Then
$$\sum_{u \in \Gamma_m, i \leq m \leq i_j-1} f_{j,u}u(x)+g_j=\sum_{u \in \Gamma_m, i \leq m \leq i_{j'}-1} f_{j',u}u(x)+g_{j'},$$
so $$\sum_{u \in \Gamma_m, i \leq m \leq i_j-1} f_{j,u}u(x)=\sum_{u \in \Gamma_m, i \leq m \leq i_j-1} f_{j',u}u(x)\textup{ modulo }I^{i_j}.$$
By the uniqueness of the representation from degree $i$ to degree $i_j$, $f_{j,u}=f_{j',u}$ for any $j, j', u \in \Gamma_m$ where $i \leq m \leq i_j-1$. So for any pair $(j,u)$ whenever $u \in \Gamma_m, i \leq m \leq i_j-1$, $f_{j,u}$ is independent of the choice of $j$, so we can denote it by $f_u$. The expression $\sum_{u \in \Gamma_m, i \leq m < \infty} f_uu(x)$ makes sense in $S$ because $S$ is complete. We have
$f-\sum_{u \in \Gamma_m, i \leq m < \infty} f_uu(x) \in I^{i_j}$ for any $j$, so it is 0. Therefore,
$$f=\sum_{u \in \Gamma_m, i \leq m < \infty} f_uu(x)$$
is a representation of $f$. The uniqueness can be proved modulo $I^{i_j}$ for any $j$.
\end{proof}
The previous two lemmas lead to the following proposition which characterizes strong Lech-independence.
\begin{proposition}\label{prop: SLI=expandable}The following are equivalent.
\begin{enumerate}
\item $I$ is strongly Lech-independent.

\item For every minimal generating sequence $x_1,...,x_r$ of $I$ there is a standard subset $\Gamma$ of $\Mon(P)$ such that $I^i/I^{i+1}$ is free over $S/I$ with basis $u(x)$, with $u \in \Gamma_i$.

\item For every minimal generating sequence $x_1,...,x_r$ of $I$ there is a standard subset $\Gamma$ of $\Mon(P)$ such that for any $i$, $x_1,...,x_r$ is $\Gamma$-expandable from degree $i$ to $i+1$.

\item For every minimal generating sequence $x_1,...,x_r$ of $I$ there is a standard subset $\Gamma$ of $\Mon(P)$ such that for any $i<j$, $x_1,...,x_r$ is $\Gamma$-expandable from degree $i$ to $j$.

\item When $S$ is complete, for every minimal generating sequence $x_1,...,x_r$ of $I$ there is a standard subset $\Gamma$ of $\Mon(P)$ such that for any $i$, $x_1,...,x_r$ is $\Gamma$-expandable from degree $i$ to $\infty$.
\end{enumerate}
\end{proposition}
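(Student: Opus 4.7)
The plan is to establish the equivalences via the cycle $(1)\Leftrightarrow(2)\Leftrightarrow(3)\Rightarrow(4)\Rightarrow(5)\Rightarrow(3)$. Lemmas 3.7 and 3.8 handle all the passages between expansion conditions; the module-theoretic content sits in $(1)\Rightarrow(2)$, where I would use a Gr\"obner-style construction of $\Gamma$.

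The implication $(2)\Rightarrow(1)$ is immediate from the definition. For $(1)\Rightarrow(2)$, fix a minimal generating sequence $x_1,\dots,x_r$ and a multiplicative monomial order $<$ on $Mon(P)$ (e.g., lex). Let $\bar k=S/\mathfrak{n}$ and set $\bar V_i := (I^i/I^{i+1})\otimes_{S/I}\bar k$. Define
\[
\Gamma_i := \{u\in Mon(P)_i:\ \overline{u(x)}\notin \operatorname{span}_{\bar k}(\overline{v(x)}:v\in Mon(P)_i,\ v<u)\text{ in }\bar V_i\}.
\]
Standard linear algebra shows $\{\overline{u(x)}:u\in\Gamma_i\}$ is a $\bar k$-basis of $\bar V_i$; Nakayama together with the freeness of $I^i/I^{i+1}$ from (1) lifts this to an $S/I$-basis of $I^i/I^{i+1}$. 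The core check is that $\Gamma := \bigsqcup_i \Gamma_i$ is a standard set: if $u = vw \in \Gamma_i$ but $v \notin \Gamma_{\deg v}$, write $\overline{v(x)}$ as a $\bar k$-combination of $\overline{v'(x)}$ with $v'<v$, lift to $S$, multiply the identity in $S$ by $w(x)$, and use multiplicativity of $<$ (so $v'w<vw=u$) to conclude that $\overline{u(x)}$ lies in $\operatorname{span}_{\bar k}(\overline{u'(x)}:u'<u)$, contradicting $u\in\Gamma_i$. This standardness check is the step I expect to be the main obstacle.

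The equivalence $(2)\Leftrightarrow(3)$ is essentially translation through the lifting $\sigma$. Given (2), applying $\sigma$ coefficientwise to the unique basis expression of $\bar f\in I^i/I^{i+1}$ produces the desired expansion from degree $i$ to $i+1$, and uniqueness is forced by $\pi\sigma=\mathrm{id}$. Conversely, existence in (3) shows $\{u(x):u\in\Gamma_i\}$ generates $I^i/I^{i+1}$, and uniqueness applied to $0\in I^i$ (comparing an arbitrary combination $\sum_{u\in\Gamma_i}\sigma(\bar a_u)u(x)\in I^{i+1}$ with the trivial expansion $0$) gives $S/I$-linear independence.

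For the remaining implications I would follow the two lemmas mechanically. For $(3)\Rightarrow(4)$, fix $i$ and induct on $j$: the base case $j=i+1$ is (3), and the step applies Lemma 3.7 with $(i_1,i_2,i_3)=(i,j,j+1)$, using the inductive hypothesis for $(i,j)$ and condition (3) for $(j,j+1)$. For $(4)\Rightarrow(5)$, apply Lemma 3.8 to the sequence $i'_j=i+j$. For $(5)\Rightarrow(3)$, apply Lemma 3.7 with $(i_1,i_2,i_3)=(i,i+1,\infty)$, where conditions (2) and (3) of that lemma are two instances of (5) (starting at degrees $i$ and $i+1$ respectively).
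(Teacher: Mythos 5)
Your proposal is correct and takes essentially the same route as the paper: its (1)$\Rightarrow$(2) also produces $\Gamma$ as the monomials outside an initial ideal (of the defining ideal $J$ of the special fibre ring $\mathcal{F}_I(S)=gr_I(S)\otimes_S S/\mathfrak{n}$, with respect to a degree-refining term order), which is exactly your degreewise greedy construction, the only difference being that you verify standardness of $\Gamma$ by hand via multiplicativity of the order rather than citing standard facts about initial ideals. The remaining implications are handled identically through the lifting $\sigma$ and Lemmas 3.7 and 3.8, with the cosmetic difference that the paper closes the loop with (5)$\Rightarrow$(4) while you use the equivalent (5)$\Rightarrow$(3).
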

\begin{remark}
In \Cref{prop: SLI=expandable}, (1) means $I^i/I^{i+1}$ is free over $S/I$ for any $i \geq 1$, and (2) means (1) and we can choose free bases coming from a single standard set for all degrees. So priorly, (2) is stronger than (1).    
\end{remark}
\begin{proof}
(1) implies (2): Let $I=(x_1,...,x_r)$. Since $I^i/I^{i+1}$ is free, the preimage of a $k$-basis of $I^i/I^{i+1} \otimes_S S/\mathfrak{n}$ forms an $S/I$-basis of $I^i/I^{i+1}$. Consider the special fibre ring $\mathcal{F}_I(S) = \gr_I(S) \otimes_S S/\mathfrak{n}$, then it is standard graded over the field $S/\mathfrak{n}=k$. We may write $\mathcal{F}_I(S)=k[T_1,...,T_r]/J$ for some homogeneous ideal $J$ such that the image of $x_i$ is $T_i+J$ for $1 \leq i \leq r$. Let $\Gamma=\Mon(k[T_1,...,T_r]) \backslash \Mon(in(J))$, where the initial is taken with respect to any term order which is a refinement of the partial order given by the total degree. Then by Proposition 2.2.5 in \cite{eisenbud2013commutative}, the monomials in $\Gamma_i$ is a $k$-basis of $(k[T_1,...,T_r]/J)_i=(\mathcal{F}_I(S))_i=I^i/I^{i+1} \otimes_S S/\mathfrak{n}$. So taking the preimage, we know that $u(x),u \in \Gamma_i$ is an $S/I$-basis of $I^i/I^{i+1}$.

(2) implies (1): trivial.

(2) implies (3): Suppose (2) is true. Let $f \in I^i$. Since $I^i/I^{i+1}$ is generated by $u(x),u \in \Gamma_i$, $f+I^{i+1}=\sum_{u \in \Gamma_i}f_uu(x)+I^{i+1}$. So $f=\sum_{u \in \Gamma_i}f_uu(x)+g, g \in I^{i+1}$. If there is another representation $\sum_{u \in \Gamma_i}f'_uu(x)+g', g \in I^{i+1}$, then in $I^i/I^{i+1}$ we have that $\sum_{u \in \Gamma_i}f'_uu(x)=\sum_{u \in \Gamma_i}f'_uu(x)$. But $\{u(x),u \in \Gamma_i\}$ is an $S/I$-basis of $I^i/I^{i+1}$, so $f_u=f'_u$ modulo $I$. By our choice of $f_u$ and $f'_u$, $f_u,f'_u \in \sigma(S/I)$, which implies $f_u=\sigma(f_u+I)=\sigma(f'_u+I)=f'_u$. This proves (3).

(3) implies (2): Suppose (3) is true. By the existence and the uniqueness of the representation of every element in $I^i$ modulo $I^{i+1}$, we know that $I^i/I^{i+1}$ is free over $S/I$ with basis $u(x)$, with $u \in \Gamma_i$.

(3) implies (4): use \Cref{lem: expansioni1i2i3} and induct on $j-i$.

(4) implies (3): trivial.

(4) implies (5): use \Cref{lem: expansionijinfty}.

(5) implies (4): use \Cref{lem: expansioni1i2i3} for $i_3=\infty$.
\end{proof}
\begin{caution}\label{caution-expansion}
We have the following implication:
\begin{align*}
\forall i,j,I \textup{ is }\Gamma-\textup{ expandable from degree } i \textup{ to } j\\
\xRightarrow{}I \textup{ is }\Gamma-\textup{ expandable from degree } 0 \textup{ to } \infty (I \textup{ is }\Gamma\textup{-expandable).}    
\end{align*}
However, the reverse implication may not hold. If $I$ is strongly Lech-independent, then by \Cref{prop: SLI=expandable} it is $\Gamma$-expandable for some $\Gamma$, but the converse is false.
\end{caution}

For a strongly Lech-independent ideal $I$, the choice of $\Gamma$ satisfying (2)-(5) of \Cref{prop: SLI=expandable} is not necessarily unique. However, \Cref{prop: expansiondimeinv} below shows that $\dim (\Gamma)$ and $e(\Gamma)$ are independent of the choice of $\Gamma$.
\begin{proposition}\label{prop: expansiondimeinv}Let $I$ be a strongly Lech-independent ideal of a local ring $(S,\mathfrak{n})$. Then $\dim (\Gamma)$ and $e(\Gamma)$ are independent of the choice of $\Gamma$ whenever $I$ is $\Gamma$-expandable from degree $i$ to $j$ for any $i<j$. If moreover $S/I$ is Artinian, then $\dim(\Gamma)=\dim S$ and $e(I)=l(S/I)e(\Gamma)$. In particular, if $I$ is the maximal ideal $\mathfrak{n}$, then $e(\Gamma) = e(S)$.
\end{proposition}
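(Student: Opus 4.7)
The plan is to reduce the whole proposition to the single observation that $|\Gamma_i|$ is an invariant of $I$, and then to compare two different formulas for the Hilbert--Samuel function.

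First I will invoke the equivalence of (1) and (2) in Proposition 3.9: whenever $I$ is $\Gamma$-expandable the monomials $u(x)$, $u \in \Gamma_i$, form a free $S/I$-basis of $I^i/I^{i+1}$. In particular $|\Gamma_i|$ equals the rank of the free $S/I$-module $I^i/I^{i+1}$, which is intrinsic to $I$ and independent of any choice of minimal generators or auxiliary $\Gamma$. By Definition 2.4 the Hilbert series $HS_\Gamma(z) = \sum_i |\Gamma_i| z^i$ is then determined by $I$, and so are its pole order $\dim \Gamma$ at $z = 1$ and its residue $e(\Gamma) = \lim_{z \to 1}(1-z)^{\dim \Gamma}HS_\Gamma(z)$. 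This gives the first assertion.

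Next, assuming $S/I$ is Artinian, $I$ is $\mathfrak{n}$-primary and $l(S/I) < \infty$. Freeness of $I^i/I^{i+1}$ over $S/I$ yields $l(I^i/I^{i+1}) = l(S/I)\,|\Gamma_i|$, so summing over the $\mathfrak{n}$-adic filtration gives
\[
l(S/I^{n+1}) = l(S/I)\sum_{i=0}^n |\Gamma_i|.
\]
By Hilbert--Samuel theory the left-hand side is, for large $n$, a polynomial of degree $d := \dim S$ with leading coefficient $e(I)/d!$.

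To finish I will match this against the asymptotics of the right-hand side. By the Stanley decomposition (Proposition 2.6), $HS_\Gamma(z)$ is a rational function of the form $g(z)/(1-z)^{\dim \Gamma}$ with $g(1) = e(\Gamma) \neq 0$; hence $HS_\Gamma(z)/(1-z)$ has a pole of order $\dim \Gamma + 1$ at $z = 1$ with residue $e(\Gamma)$, so a routine partial-fractions argument yields $\sum_{i \leq n}|\Gamma_i| = e(\Gamma)\,n^{\dim \Gamma}/(\dim \Gamma)! + O(n^{\dim \Gamma - 1})$. Matching the degrees of the two expressions for $l(S/I^{n+1})$ forces $\dim \Gamma = d = \dim S$, and matching leading coefficients forces $e(I) = l(S/I)\,e(\Gamma)$. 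The case $I = \mathfrak{n}$ is then immediate since $l(S/\mathfrak{n}) = 1$ and $e(\mathfrak{n}) = e(S)$ by definition. The only nonroutine step is the asymptotic analysis of the partial sum, which reduces to the standard Hilbert-polynomial computation once the rational form of $HS_\Gamma(z)$ is in hand.
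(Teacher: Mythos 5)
Your argument is correct. The first assertion is proved exactly as in the paper: $|\Gamma_i|$ is the rank of the free $S/I$-module $I^i/I^{i+1}$, hence intrinsic to $I$, so $HS_\Gamma$, its pole order and its multiplicity do not depend on $\Gamma$. For the Artinian case your route differs in a mild but genuine way from the paper's. The paper identifies $HS_\Gamma(z)$ with the Hilbert series of the fiber cone $\mathcal{F}_I(S)=gr_I(S)\otimes_S S/\mathfrak{n}$ and gets $\dim\Gamma=\dim S$ from $\dim S=\dim gr_I(S)$ together with the flat-fiber dimension formula $\dim gr_I(S)=\dim S/I+\dim\mathcal{F}_I(S)$, then reads off $e(I)=l(S/I)e(\Gamma)$ from $e(I)=\lim_i (d-1)!\,l(I^i/I^{i+1})/i^{d-1}$ and $l(I^i/I^{i+1})=l(S/I)|\Gamma_i|$. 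You instead sum the same length identity over the $I$-adic filtration (your phrase ``$\mathfrak{n}$-adic filtration'' is a slip, but the formula $l(S/I^{n+1})=l(S/I)\sum_{i\le n}|\Gamma_i|$ is the right one), compare the Hilbert--Samuel polynomial of the $\mathfrak{n}$-primary ideal $I$, of degree $\dim S$ and leading coefficient $e(I)/d!$, with the asymptotics of $\sum_{i\le n}|\Gamma_i|$ extracted from the Stanley-decomposition form $HS_\Gamma(z)=g(z)/(1-z)^{\dim\Gamma}$ with $g(1)=e(\Gamma)>0$, and match degrees and leading coefficients to get both conclusions at once. This avoids any mention of $gr_I(S)$ or the fiber cone, at the cost of invoking the standard dimension-theory fact that the Hilbert--Samuel polynomial of an $\mathfrak{n}$-primary ideal has degree $\dim S$ (which carries essentially the same content as $\dim S=\dim gr_I(S)$ used by the paper); note also that your degree-matching step genuinely needs $e(\Gamma)\neq 0$, i.e.\ that the pole order of $HS_\Gamma$ at $z=1$ is exactly $\dim\Gamma$, which Proposition 2.6 supplies. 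Both proofs rest on the same key freeness fact, so the difference is one of packaging rather than substance.
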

\begin{proof}We know that
$$HS_{P/I_{\Gamma}}(z) = \sum_{i \geq 0} |\Gamma_i|z^i.$$
Since $|\Gamma_i|= \textup{rank}_{S/I}I^i/I^{i+1}$ is independent of the choice of $\Gamma$, so is $HS_{P/I_{\Gamma}}(z)$; and $\dim (\Gamma)$ and $e(\Gamma)$ only depend on $HS_{P/I_{\Gamma}}(z)$, hence they are also independent of the choice of $\Gamma$. 

Now we assume $S/I$ is Artinian. Then $\dim S = \dim \gr_I(S)$ and $\gr_I(S)$ is flat over $S/I=\gr_I(S)_0$, so
$$\dim \gr_I(S)=\dim S/I + \dim \gr_I(S)\otimes_{S/I}S/\mathfrak{n} = \dim \mathcal{F}_I(S).$$
The $i$-th component of $\mathcal{F}_I(S)$ is $I^i/I^{i+1}\otimes_{S/I}S/\mathfrak{n}$, and
$$\textup{ rank}_{S/\mathfrak{n}}(I^i/I^{i+1}\otimes_{S/I}S/\mathfrak{n})=\textup{ rank} _{S/I}I^i/I^{i+1}=|\Gamma_i|$$
because $I^i/I^{i+1}$ is free over $S/I$. This means $HS_{P/I_{\Gamma}}(z)=HS_{\mathcal{F}_I(S)}(z)$ which implies $\dim P/I_{\Gamma} = \dim \mathcal{F}_I(S) = \dim S$. Finally,
$$e(I)=\lim_{i \to \infty} (d-1)!l(I^i/I^{i+1})/i^{d-1}$$
and
$$e(P/I_{\Gamma})=\lim_{i \to \infty} (d-1)!|\Gamma_i|/i^{d-1}.$$
But $l(I^i/I^{i+1})=|\Gamma_i|l(S/I)$. So $e(I)=l(S/I)e(\Gamma)$. The last statement is obvious by taking $I=\mathfrak{n}$.
\end{proof}

Strong Lech-independence implies Lech-independence, but not conversely. For example, if $S=S_0[[x]]/\mathfrak{n}_0x^2$ and $I=(x)$. Then $I$ is Lech-independent, but not strongly Lech-independent.

One important source of strongly Lech-independent ideals is given by the following proposition:
\begin{proposition}\label{prop: SLI mS}Suppose $(R,\mathfrak{m}) \to (S,\mathfrak{n})$ is a flat local map, and $J$ is a strongly Lech-independent ideal in $R$. Pick any $\Gamma$ such that $J$ is $\Gamma$-expandable from degree $i$ to $j$ for any $i<j$. Such $\Gamma$ exists by \Cref{prop: SLI=expandable}. Then $I = JS$ is strongly Lech-independent in $S$, and $I$ is $\Gamma$-expandable from degree $i$ to $j$ for any $i<j$. In particular, if $J = \mathfrak{m}$, then $I = \mathfrak{m}S$ is strongly Lech-independent. Moreover, for any $\Gamma$ such that $\mathfrak{m}S$ is $\Gamma$-expandable from degree $i$ to $\infty$ for any $i$, we have $e(\Gamma)=e(R)$.
\end{proposition}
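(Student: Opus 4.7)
The plan is to reduce to the equivalence between strong Lech-independence and the expansion property (Proposition 3.9) and then carry out a single base-change step. Choose a minimal generating sequence $x_1, \ldots, x_r$ of $J$ witnessing the $\Gamma$-expandability; by the direction $(5) \Rightarrow (2)$ of Proposition 3.9, the set $\{u(x) : u \in \Gamma_i\}$ is an $R/J$-basis of $J^i/J^{i+1}$ for every $i$. I will show that the images of the $x_i$ in $S$ form a minimal generating sequence of $I=JS$ and that the same set $\{u(x) : u \in \Gamma_i\}$ is an $S/I$-basis of $I^i/I^{i+1}$; the expansion property for $I$ then follows from the reverse direction $(2) \Rightarrow (5)$ of Proposition 3.9, and strong Lech-independence of $I$ from Proposition 3.3.

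For the base-change step, flatness of $R \to S$ applied to $0 \to J^{i+1} \to J^i \to J^i/J^{i+1} \to 0$ identifies $J^i \otimes_R S$ with $J^i S = I^i$ and $J^{i+1} \otimes_R S$ with $I^{i+1}$, giving a canonical isomorphism
\[ I^i/I^{i+1} \cong (J^i/J^{i+1}) \otimes_R S \cong (J^i/J^{i+1}) \otimes_{R/J} (S/I), \]
using that $J$ annihilates $J^i/J^{i+1}$ and that $R/J \otimes_R S = S/JS = S/I$. Extension of scalars from $R/J$ to $S/I$ preserves freeness and carries the $R/J$-basis $\{u(x) : u \in \Gamma_i\}$ to an $S/I$-basis of $I^i/I^{i+1}$ consisting of the same monomial symbols, now interpreted in $S$. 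The case $i=1$ shows in particular that $x_1, \ldots, x_r$ is a minimal generating sequence of $I$.

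For the special case $J = \mathfrak{m}$, strong Lech-independence of the maximal ideal of $R$ is recorded in Example 3.19, so the general argument applies. For the multiplicity identity, apply Proposition 3.11 to $R$ and its maximal ideal: since $R/\mathfrak{m}$ is a field, hence Artinian of length one, one obtains $e(R) = e(\mathfrak{m}) = l(R/\mathfrak{m}) \cdot e(\Gamma) = e(\Gamma)$.

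The only genuine subtlety is verifying that the same symbols $\{u(x) : u \in \Gamma_i\}$ actually serve as a basis after the flat base change, i.e., that no new dependencies are introduced and no members of the set collapse; this is precisely what the preservation of freeness under extension of scalars provides, so once the canonical identification $I^i/I^{i+1} \cong (J^i/J^{i+1}) \otimes_{R/J} (S/I)$ is in place, the rest is bookkeeping against Proposition 3.9 and Proposition 3.11.
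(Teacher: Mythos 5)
Your argument is essentially the paper's: flatness gives $I^i/I^{i+1}\cong J^i/J^{i+1}\otimes_R S$, base change preserves freeness and carries the basis $\{u(x):u\in\Gamma_i\}$ to an $S/I$-basis, and the equivalences of Proposition 3.9 translate this back into $\Gamma$-expandability of $I$ from degree $i$ to $\infty$; the paper gets minimal generation of $I$ directly from the map being local rather than from your $i=1$ observation, but both work (your route implicitly uses that $|\Gamma_1|=r$, e.g.\ via Proposition 3.12).

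The one place where your write-up falls short of the stated claim is the final clause. The proposition asserts $e(\Gamma)=e(R)$ for \emph{any} $\Gamma$ such that $\mathfrak{m}S$ is $\Gamma$-expandable from degree $i$ to $\infty$ in $S$; such a $\Gamma$ is not a priori one for which $\mathfrak{m}$ itself is expandable in $R$, so you cannot directly apply Proposition 3.11 to $R$ and $\mathfrak{m}$ with that $\Gamma$, which is what your computation $e(R)=e(\mathfrak{m})=l(R/\mathfrak{m})e(\Gamma)$ does. The fix is exactly the paper's two-step comparison: choose $\Gamma'$ with $\mathfrak{m}$ $\Gamma'$-expandable in $R$, note by the first part that $\mathfrak{m}S$ is then $\Gamma'$-expandable in $S$, use the independence statement of Proposition 3.11 applied to $\mathfrak{m}S$ in $S$ to get $e(\Gamma)=e(\Gamma')$, and only then apply Proposition 3.11 in $R$ to get $e(\Gamma')=e(R)$. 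With that one-line correction your proof matches the paper's.
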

\begin{proof}If $(R,\mathfrak{m}) \to (S,\mathfrak{n})$ is flat local map, then there is an isomorphism $I^i/I^{i+1} \cong J^i/J^{i+1} \otimes_{R/J} S/I$. Note that freeness and a basis of a module is preserved under any base change. Let $x_1,x_2,...,x_r$ be a minimal generating set of $J$, and $y_i$ be the image of $x_i$, then $y_1,y_2,...,y_r$ is a minimal generating set of $I$ because the map is local. So, if $J$ is $\Gamma$-expandable from degree $i$ to $j$ for any $i<j$, or equivalently $J$ is $\Gamma$-expandable from degree $i$ to $i+1$ for any $i$, then $u(x), u \in \Gamma_i$ is a basis of $J^i/J^{i+1}$ over $R/J$. This means $u(y), u \in \Gamma_i$ is a basis of $I^i/I^{i+1}$ over $S/I$. Hence $I$ is $\Gamma$-expandable from degree $i$ to $i+1$ for any $i$, so $I$ is $\Gamma$-expandable from degree $i$ to $j$ for any $i<j$. If $J = \mathfrak{m}$, $R/J$ is a field, so $J^i/J^{i+1}$ is free over $R/J$ and $J$ is strongly Lech-independent. We can pick a standard set $\Gamma_0$ such that $J$ is $\Gamma_0$-expandable from degree $i$ to $j$ for any $i<j$, then $I$ is also $\Gamma_0$-expandable from degree $i$ to $j$ for any $i<j$. Then $e(\Gamma) = e(\Gamma_0) = e(R)$ by \Cref{prop: expansiondimeinv}.
\end{proof}

We stress that \Cref{prop: SLI mS} does not provide all the strongly Lech-independent ideals, as shown in the following example:
\begin{example}\label{eg: SLI not mS}Let $k$ be a field, $S = k[[t,x,y]]/(t^2,x^2-ty^2)$, $I=(x,y)$. Then $I$ is strongly Lech-independent in $S$. Let $R$ be the subring generated over $k$ by $x$, $y$. Then $R=k[[x,y]]/(x^4)$ and $S$ is not flat over $R$.
\end{example}
\begin{proof}We have $\gr_I(S) = k[t,x,y]/(t^2,x^2-ty^2)$. It is a standard graded ring with $\deg t$ = 0, $\deg x$ = $\deg y$ = 1. Let $S_0 = \gr_I(S)_0 = k[t]/t^2$, then $\gr_I(S)_1=S_0x+S_0y$, and for $i \geq 2$,
$\gr_I(S)_i=\sum_{0 \leq j \leq i} S_0x^jy^{i-j}/\sum_{2 \leq j \leq i} S_0(x^jy^{i-j}-tx^{j-2}y^{i-j+2})$. We see $\gr_I(S)_i$ is free over $S_0$ for $i \geq 1$, which implies that $I$ is strongly Lech-independent. We have  $R=k[[x,y]]/((t^2,x^2-ty^2) \cap k[[x,y]])=k[[x,y]]/(x^4)$. Now $S$ has a minimal generating set $1,t$ as an $R$-module and a nontrivial relation $x^2-ty^2=0$, so $S$ is not free over $R$. Since $S$ is module-finite over $R$ and $R$ is local, $S$ is not flat over $R$.
\end{proof}

\section{strong Lech-independence and inequalities on multiplicities of ideals}\label{se:Maintheorem}
This section is the technical core of the paper and covers the proof of the main theorems on multiplicities of ideals. The idea of the proof proceeds in three steps. 
\begin{enumerate}
\item Pick out a certain subset $A$ of $S$ such that every element in $S$ is a possibly infinite $k$-linear combination of elements in $A$. The precise statement is in \Cref{lem: product representation}.
\item For every fixed $t$, we pick out a subset $A_t$ which generates $S$ over $k$ after we mod out $\mathfrak{n}^t$. Then, $|A_t|$ gives an upper bound of $l(S/\mathfrak{n}^t)$. As $e(S)=\lim_{t \to \infty}d!l(S/\mathfrak{n}^t)/t^d$, the value of $\lim_{t \to \infty}d!|A_t|/t^d$ gives an upper bound of $e(S)$. Moreover, in the standard graded case, we prove $|A_t|=l(S/\mathfrak{n}^t)$, so $e(S)=\lim_{t \to \infty}d!|A_t|/t^d$.
\item Calculate $\lim_{t \to \infty}d!|A_t|/t^d$ using power series. A suitable choice of $A_t$ makes this computation possible.
\end{enumerate}
To complete these steps, we need to find a way to record when an element of $S$ lies in $\mathfrak{n}^t$. Therefore, we recall the concept of $\mathfrak{n}$-adic order here:
\begin{definition}[$\mathfrak{n}$-adic order]\label{def: nadicorder}
The order of $f$, denoted by $\ord(f)$, is the unique integer $t$ such that $f \in \mathfrak{n}^t \backslash \mathfrak{n}^{t+1}$ if $f \neq 0$ and is $\infty$ if $f=0$.  
\end{definition}
All these steps contribute to the two main results of this paper, namely \Cref{thm: maintheoremA} and \Cref{thm: maintheoremB}.

We still follow \Cref{assumption2.1}, and set up some assumptions unique to this section.
\begin{assumption}\label{assumption5.1}
Throughout this section, unless otherwise stated, we assume the following:
\begin{enumerate}
\item $l(S/I)=l<\infty$; in other words, $I$ is $\mathfrak{n}$-primary.
\item $S$ has a coefficient field $k$.
\item $f_1,\ldots,f_l$ is a sequence in $S$ such that they form a $k$-basis of $S/I$ modulo $I$; such sequence exists according to \Cref{eg: klinearlifting}.
\item $P=k[T_1,\ldots,T_r]$ where $r$ is the number of minimal generators according to \Cref{se:notation}, and $\Gamma$ is a standard set in $\Mon(P)$.
\item $IS_c$ is $\Gamma$-expandable as $S_c$-ideal where $S_c$ is the completion of $S$.
\end{enumerate}
We also write $t_i=\ord(x_i)$ which is a positive integer because $I \subset \mathfrak{n}$.
\end{assumption}
\Cref{assumption5.1} does not give any comparison between any $t_i$'s, but we may assume additionally $t_1\leq t_2\leq \ldots \leq t_r$ by permuting $x_i$'s.

In the proof below, we will work with elements of the form $f_iu(x), u \in \Gamma$ when $f_1,\ldots,f_l,x=(x_1,\ldots,x_r), \Gamma$ are all given. The proof is essentially counting such elements with given orders. So we introduce an extra definition called the \textit{expected order} which estimates the orders of such elements.
\begin{definition}\label{def: eord}
Let $f_1,\ldots,f_l,x_1,\ldots,x_r$ be elements in $S$. We call this sequence the \textit{degree-predicting data}. For elements of the form $f_iu(x)$, where $u$ is a monomial, we define 
\textbf{one of the expected order} of $f_iu(x)$ with respect to the degree-predicting data $(f_i,x_i)$ to be
$$\eord(f_iu(x))=\ord(f_i)+\sum_{1 \leq i \leq r}\deg_{T_i}u\cdot\ord(x_i)$$
We will omit the degree-predicting data if it is clear from the context. An element $f$ may have multiple choices of the expected order only if we can write $f=f_iu(x)=f_{i'}u'(x)$ for $i \neq i'$ or $u \neq u'$; otherwise there is only one choice of expected order and we call it \textbf{the expected order} of $f=f_iu(x)$.
\end{definition}
\begin{remark}\label{rmk: ord>eord}
For general elements $x,y \in S$, $\ord(xy) \geq \ord(x)+\ord(y)$. Thus, if $u=T_1^{a_1}\ldots T_r^{a_r}$, then $f_iu(x)=f_ix_1^{a_1}\ldots x_r^{a_r}$ and $\ord(f_iu(x)) \geq \ord(f_i)+a_1\ord(x_1)+\ldots+a_r\ord(x_r)=\eord(f_iu(x))$. That is, the expected order is bounded above by the order, and it can be seen as a prediction of the order using finitely many data $\ord(f_i),\ord(x_i)$. We only define the expected order for elements of the form $f_iu(x)$ where $u$ is a monomial.    
\end{remark}
\begin{lemma}\label{lem: product representation}Assume $S$ is complete, and $\sigma:S/I \to S$ is the $k$-linear lifting which maps $f_i+I$ to $f_i$ of which the existence is guaranteed in \Cref{eg: klinearlifting}. Then expanding $f \in S$ as a $k$-linear combination of $f_i \cdot u(x)$ gives a $k$-linear isomorphism
$$S \cong \prod_{1 \leq i \leq l, u \in \Gamma} k \cdot f_iu(x).$$
\end{lemma}
\begin{proof}Since $\sigma$ is a $k$-linear lifting, $\sigma(S/I)$ is a $k$-vector space inside $S$. Since $S$ is complete, There is a natural $k$-linear map
$$\prod_{u \in \Gamma} \sigma(S/I)\cdot u(x) \to S, (f_u\cdot u(x))_u \to \sum_{u \in \Gamma}f_uu(x).$$
The sequence $x$ is $\Gamma$-expandable, so this is a set-theoretic bijection. Therefore, it is a $k$-linear isomorphism with inverse given by the unique expansion, so $\prod_{u \in \Gamma} \sigma(S/I)\cdot u(x) \cong S$. Now $\sigma(S/I)$ is a finite dimensional $k$-vector space with basis $f_1,\ldots,f_l$, so $\sigma(S/I) \cong \prod_{1 \leq i \leq l}k\cdot f_i$. The desired isomorphism is just the combination of the above two isomorphisms.
\end{proof}
\begin{notations}\label{def: At}
Set
$$A=\{g \in S|g=f_iu(x), 1 \leq i \leq l,u \in \Gamma\}$$
Note that in $A$, $f_iu(x)=f_{i'}u'(x)$ implies $i=i'$ and $u=u'$, otherwise the equality will give two different representations of the same element, contradicting the $\Gamma$-expandable property. 

We use $f_1,\ldots,f_l,x_1,\ldots,x_r$ as a degree-predicting data, then every element in $A$ has a unique expected order. For a positive integer $t$, set
$$A_t=\{g \in A|\eord(g)<t\}$$
For $t \geq 2$, set
$$\Delta A_t=A_{t+1}\backslash A_t=\{g \in A|\eord(g)=t\}$$
and set $\Delta A_1=A_1$.
\end{notations}
Under \Cref{def: At} we see the result in \Cref{lem: product representation} becomes $S \cong \prod_{g \in A} k\cdot g$.

We give the following definition of what we mean by ``up to completion":
\begin{definition}\label{def: uptocompletion}
We say the local ring $(S,\mathfrak{n})$ is standard graded up to completion if there is a standard graded ring $(S_g,\mathfrak{n}_g)$ over the field $k=S/\mathfrak{n}$ with maximal homogeneous ideal $\mathfrak{n}_g$ such that the $\mathfrak{n}$-adic completion of $S$ is isomorphic to the $\mathfrak{n}_g$-adic completion of $S_g$. We denote this common complete local ring $(S_c,\mathfrak{n}_c)$ and view both $S$ and $S_g$ as subrings of $S_c$. In this case, for an $S$-ideal $I$, we say $I$ is homogeneous up to completion and has homogeneous generators $x_1,x_2,\ldots,x_r$ if $IS_c=I_gS_c$ for a homogeneous ideal $I_g \subset S_g$, $x_1,\ldots,x_r \in S_g$, and $x_1,\ldots,x_r$ are homogeneous generators of $I_g$.  
\end{definition}
The completion of a module of finite length is itself, thus taking completion does not affect strong Lech-independence of Artinian ideals and does not change the multiplicity. Moreover, it does not change the expansion property in finite degrees:
\begin{lemma}\label{lem: uptocompletioninv}Let 
$(S, \mathfrak{n})$ be a Noetherian local ring and $(S_c,\mathfrak{n}_c)$ is the completion of $S$. Let $I$ be an $S$-ideal and $I_c=IS_c$. Assume moreover $S/I$ is Artinian. Then $S/I=S_c/I_c$ and either $I, I_c$ are both strongly Lech-independent or none of them is strongly Lech-independent. For a fixed standard set $\Gamma$, if they are strongly Lech-independent and one of them is $\Gamma$-expandable from degree $i$ to $j$ for any $i<j$, then both of them are $\Gamma$-expandable from degree $i$ to $j$ for any $i<j$.

\end{lemma}
\begin{proof}
We have $S/I \cong S_c/I_c$ and for any $t$, $I^t/I^{t+1}\cong I_c^t/I_c^{t+1}$. Thus $I^t/I^{t+1}$ is free over $S/I$ if and only if $I_c^t/I_c^{t+1}$ is free over $S_c/I_c$. Moreover, 
$I$ is generated by $x=(x_1,\ldots,x_r)$ implies that $I_c$ is also generated by $x=(x_1,\ldots,x_r)$, and the isomorphism $I^t/I^{t+1} \cong I_c^t/I_c^{t+1}$ maps $u(x), u \in \Gamma_t$ still to $u(x), u \in \Gamma_t$, thus they are either both free bases or both not free bases.
\end{proof}
\begin{lemma}\label{lem: AtspansSnt}Using \Cref{def: At}, $S/\mathfrak{n}^t$ can be spanned by $A_t$ over $k$.
\end{lemma}
\begin{proof}
We may replace $S$ by its completion to assume $S$ is complete. By the above theorem, \Cref{assumption5.1} is preserved under completion. Every element in $S/\mathfrak{n}^t$ is of the form $f+\mathfrak{n}^t$. We expand $f$ as $f=\sum_{g \in A}c_gg$ where $c_g \in k$ by \Cref{lem: product representation}. If $g \in A$ with $\eord(g)>t$ then $\ord(g)>t$ and $g=0$ in $S/\mathfrak{n}^t$. Therefore, $f=\sum_{g \in A, \eord(g)<t}c_{g}g$ in $S/\mathfrak{n}^t$. It suffices to prove this is a finite sum. Actually, since $I \subset \mathfrak{n}$, $\ord(x_i)\geq 1$, which implies $\eord(f_iu(x))\geq \deg(u)$ where $\deg$ is the total degree in $k[T_1,\ldots,T_r]$. Thus $\eord(f_iu(x))<t$ implies $\deg(u)<t$, so there are only finitely many choices of $u$, and there are also finitely many choices of $f_i$, so there are only finitely many $g \in A$ with $\eord(g)<t$.
\end{proof}

The next proposition gives the limit $\lim_{t \to \infty}\frac{d!|A_t|}{t^d}$. 
\begin{lemma}\label{lem: Atlimitcomputation}
We adopt \Cref{assumption5.1} and \Cref{def: At}. Suppose $t_1 \leq t_2 \leq ... \leq t_r$. Denote $d=\dim S$. 
\begin{enumerate}
\item Let $c(z)=\sum_{t \geq 0} c_tz^t$, where $c_t=|\Delta A_t|$. Then
    $$c(z)=P(z)HS_{\Gamma}(z^{t_1},z^{t_2},...,z^{t_r})$$
    where $P(z) \in \mathbb{Z}[z]$ with $P(1)=l=l(S/I)$ and $c(z)$ satisfies (P1), (P2$_d$), (P3$_{d+1}$) of \Cref{lem: residueleadingterm}.
\item We have 
    $$c(z)/(1-z)=\sum_{t \geq 0}|A_t|z^t.$$
\item $$\lim_{t \to \infty}\frac{d!|A_t|}{t^d}=l(S/I)\sum_{i \in \Lambda,|S_i|=d} \frac{1}{\Pi_{T_j \in S_i}t_j}.$$
\end{enumerate}
\end{lemma}
\begin{proof}
(1) By definition
$c(z)=\sum_{t \geq 0}|\Delta A_t|z^t$. But $\Delta A_t=\{g \in A| \eord(g)=t\}$. Therefore, 
\begin{align*}
c(z)=\sum_{g \in A}z^{\eord(g)}=\sum_{1 \leq i \leq l,u \in \Gamma}z^{\eord(f_iu(x))}\\
=\sum_{1 \leq i \leq l,u \in \Gamma}z^{\ord(f_i)+\sum_{1 \leq j \leq r}  \ord(x_j)\deg_{T_j}(u)}\\
=\sum_{1 \leq i \leq l} z^{\ord(f_i)}\sum_{u \in \Gamma} z^{\sum_{1 \leq j \leq r}  \ord(x_j)\deg_{T_j}(u)}\\
=\sum_{1 \leq i \leq l} z^{\ord(f_i)}\sum_{u \in \Gamma} (z^{t_1})^{\deg_{T_1}u}(z^{t_2})^{\deg_{T_2}u}\ldots(z^{t_r})^{\deg_{T_r}u}\\
=P(z)\sum_{u \in \Gamma} u(z^{t_1},z^{t_2},\ldots,z^{t_r})\\
=P(z)HS_{\Gamma}(z^{t_1},z^{t_2},\ldots,z^{t_r}).
\end{align*}
Here $P(z)=\sum_{1 \leq i \leq l} z^{\ord(f_i)}$, so $P(1)$ is the number of $f_i$'s which is $l=l(S/I)$.

Let $(u_i,S_i)_{i \in \Lambda}$ be a Stanley decomposition of $\Gamma$. Then by \Cref{prop: Stanleyconsequence} $HS_{\Gamma}(\underline{z})=\sum_{i \in \Lambda} \frac{u_i(z)}{\Pi_{T_j \in S_i}(1-z_j)}$. So
\begin{equation}
HS_{\Gamma}(z^{t_1},z^{t_2},...,z^{t_r})=\sum_{i \in \Lambda} \frac{u_i(z^{t_1},z^{t_2},...,z^{t_r})}{\Pi_{T_j \in S_i}(1-z^{t_j})}.
\end{equation}
For each individual $i$, note that
$$\frac{u_i(z^{t_1},z^{t_2},...,z^{t_r})}{\Pi_{T_j \in S_i}(1-z^{t_j})}=\frac{u_i(z^{t_1},z^{t_2},...,z^{t_r})}{(\Pi_{T_j \in S_i}(1+z+...+z^{t_j-1}))(1-z)^{|S_i|}},$$
so the order at $z = 1$ of the $i$-th term is just $|S_i|$, and the other poles are given by $t_j$-th roots of unity; every $t_j$-th root of unity is a single pole of $1/(1+z+...+z^{t_j-1})$, so the order of the $i$-th term at every pole is at most $|S_i|$. So the order of the sum at $z = 1$ is at most $\max|S_i|=d$. For each pair $(u_i,S_i)$,
$$\lim_{z \to 1}\frac{u_i(z^{t_1},z^{t_2},...,z^{t_r})}{(\Pi_{T_j \in S_i}(1+z+...+z^{t_j-1}))(1-z)^{|S_i|}}(1-z)^d=\frac{1}{\prod_{T_j \in S_i}t_j}\delta_{|S_i|,d} \geq 0$$
and there is at least one $i$ such that this limit is nonzero. Taking the sum we get
$$\lim_{z \to 1}HS_{\Gamma}(z^{t_1},z^{t_2},...,z^{t_r})(1-t)^d>0.$$
In particular, it is nonzero, so $HS_{\Gamma}(z^{t_1},z^{t_2},...,z^{t_r})$ has a pole at $z=1$ of order exactly $d$. The orders of each term at the other poles are at most $d$, so the orders of their sum at these poles are at most $d$. Multiplying $P(z)$ does not change the order at $z=1$ as $P(1)\neq 0$ and it can only decrease the order of the other poles as $P(z) \in \mathbb{Z}[z]$. This means that $c(z)$ satisfies (P1), (P2$_d$), (P3$_{d+1}$).

(2) The $t$-th coefficient of $c(z)/(1-z)=c(z)(1+z+z^2+\ldots)$ is $c_0+c_1+\ldots+c_t=|A_1|+|A_2\backslash A_1|+\ldots+|A_t\backslash A_{t-1}|=|A_t|$, which is the $t$-th coefficient of the right side.

(3) If $c(z)$ satisfies (P1), (P2$_d$), (P3$_{d+1}$), then $c(z)/(1-z)$ satisfies (P1), (P2$_{d+1}$), (P3$_{d+1}$), so we can apply \Cref{lem: residueleadingterm} to 
$c(z)/(1-z)$ where we replace $d$ by $d+1$. From L'Hospital's rule we see for $t \in \mathbb{N}_+$, $\lim_{z \to 1}(1-z)/(1-z^t)=1/t$. By (1) and \Cref{lem: residueleadingterm},
\begin{align*}
\lim_{t \to \infty}\frac{d!|A_t|}{t^d}=\lim_{z \to 1}\frac{c(z)}{1-z}\cdot(1-z)^{d+1}=\lim_{z \to 1}c(z)(1-z)^d\\
=\lim_{z \to 1}P(z)\sum_{i \in \Lambda} \frac{u_i(z^{t_1},z^{t_2},...,z^{t_r})}{\Pi_{T_j \in S_i}(1-z^{t_j})}(1-z)^d\\
=l(S/I)\lim_{z \to 1}\sum_{i \in \Lambda} \frac{u_i(1,1,...,1)}{\Pi_{T_j \in S_i}(1-z^{t_j})}(1-z)^d\\
=l(S/I)\sum_{i \in \Lambda,|S_i|=d} \frac{u_i(1,1,...,1)}{\Pi_{T_j \in S_i}t_j} = l(S/I)\sum_{i \in \Lambda,|S_i|=d} \frac{1}{\Pi_{T_j \in S_i}t_j}.
\end{align*}    
\end{proof}
\begin{theorem}\label{thm: bound}We adopt \Cref{assumption5.1} and \Cref{def: At}. Suppose $t_1 \leq t_2 \leq ... \leq t_r$. Denote $d=\dim S$. Then there is an upper bound of the multiplicity of the maximal ideal:
$$e(\mathfrak{n}) \leq e(\Gamma)l(S/I)/t_1...t_{d-1}t_d.$$
If moreover $A_t$ is $k$-linearly independent modulo $\mathfrak{n}^t$ for any $t$, then there is also a lower bound:
$$e(\mathfrak{n}) \geq e(\Gamma)l(S/I)/t_rt_{r-1}...t_{r-d+1}.$$
\end{theorem}
\begin{proof}
We see $l(S/\mathfrak{n}^t) \leq |A_t|$ by \Cref{lem: AtspansSnt}. Thus
$$e(\mathfrak{n})=\lim_{t \to \infty}\frac{d!l(S/\mathfrak{n}^t)}{t^d}\leq\lim_{t \to \infty}\frac{d!|A_t|}{t^d}=l(S/I)\sum_{i \in \Lambda,|S_i|=d} \frac{1}{\Pi_{T_j \in S_i}t_j}.$$
Since we assume $t_1 \leq t_2 \leq \ldots \leq t_r$, $|S_i|=d$ implies that $\frac{1}{\Pi_{T_j \in S_i}t_j}\leq \frac{1}{t_1t_2\ldots t_d}$, so
$$e(\mathfrak{n}) \leq l(S/I)|i \in \Lambda:|S_i|=d|\frac{1}{t_1t_2\ldots t_d}=e(\Gamma)l(S/I)/t_1t_2\ldots t_d.$$
Here $|i \in \Lambda:|S_i|=d|=e(\Gamma)$ by \Cref{prop: standardset Hilbertseries}. On the other hand, if $A_t$ is $k$-linearly independent in $S/\mathfrak{n}^t$, then their images in $S/\mathfrak{n}^t$ form a $k$-basis of $S/\mathfrak{n}^t$, so $l(S/\mathfrak{n}^t)= |A_t|$. In this case
$$e(\mathfrak{n})=\lim_{t \to \infty}\frac{d!l(S/\mathfrak{n}^t)}{t^d}=\lim_{t \to \infty}\frac{d!|A_t|}{t^d}=l(S/I)\sum_{i \in \Lambda,|S_i|=d} \frac{1}{\Pi_{T_j \in S_i}t_j}.$$
When $|S_i|=d$, we have $\frac{1}{\Pi_{T_j \in S_i}t_j}\geq \frac{1}{t_rt_{r-1}\ldots t_{r-d+1}}$, so
$$e(\mathfrak{n}) \geq l(S/I)|i \in \Lambda:|S_i|=d|\frac{1}{t_rt_{r-1}\ldots t_{r-d+1}}=e(\Gamma)l(S/I)/t_rt_{r-1}\ldots t_{r-d+1}.$$
\end{proof}
The extra assumption on $A_t$ in the above theorem is quite strong and is false in general. However, it can be satisfied in the standard graded case. 
\begin{theorem}\label{thm: boundstandardgrd}
Let $(S, \mathfrak{n})$ be a local ring. Assume:
\begin{enumerate}
\item Up to completion $S$ is standard graded;
\item $I$ is $\mathfrak{n}$-primary;
\item Up to completion $I$ is homogeneous with homogeneous generators of degrees $t_1\leq t_2 \leq \ldots \leq t_r$.
\item $x_1,\ldots,x_r$ is $\Gamma$-expandable for some standard set $\Gamma$ in the completion of $S$.
\end{enumerate}
Then $e(S)=e(\mathfrak{n}) \geq e(\Gamma)l(S/I)/t_rt_{r-1}...t_{r-d+1}$. 
\end{theorem}
\begin{proof}We may assume $S$ is complete by taking completion. Condition (1), (2) and (4) imply that $S,I,\Gamma$ satisfy \Cref{assumption2.1} and \Cref{assumption5.1}, and we can choose $f_1,\ldots,f_l \in S$ satisfying \Cref{assumption5.1}. We assume $S$ is the completion of $S_g$ with respect to $\mathfrak{n}_g$. Moreover in $S$ we have $\ord(x_i)=t_i$. Since $I_g$ is homogeneous, we may choose a $k$-basis $f_i+I$ of $S/I=S_g/I_g$ such that each $f_i$ is homogeneous in $S_g$; here we view $S_g$ as a subring of $S$. Also the homogeneous minimal generators $x_1,...,x_r$ are in $S_g$. We now claim that the set $A_t$ is $k$-linearly independent in $S/\mathfrak{n}^t$. Assume $h=\sum_{g \in A_t}c_gg \in S$ is a sum satisfying $c_g \in k$ where $c_g$'s are not all $0$. We have $\eord(g)<t$ for any $c_g \neq 0$. Note that since all $f_i$'s and $x_i$'s are homogeneous elements in $S_g$, every $g \in A_t$ is a homogeneous element in $S_g$, thus $h \in S_g$. We have $h \neq 0$, otherwise we get two distinct expansions of the $h=0$ element; one of the expansion is $\sum_{g \in A_t}c_gg \in S$ and the other is the expansion with $c_g=0$ for all $g \in A$. This violates the unique expansion property, so $h \neq 0$. For $g \in A_t$, $\eord(g)=\ord(g)=\deg(g)<t$. So $h\neq 0$ can only have nonzero components in degree smaller than $t$, and in particular, it does not lie in $\mathfrak{n}_g^t$. Thus $h \notin \mathfrak{n}^t$, because $\mathfrak{n}^t \cap S_g = \mathfrak{n}_g^t$. So $A_t$ is $k$-linearly independent modulo $\mathfrak{n}^t$. Since this is true for any $t$, \Cref{thm: bound} implies that $e(S)=e(\mathfrak{n}) \geq e(\Gamma)l(S/I)/t_rt_{r-1}...t_{r-d+1}$. 
\end{proof}
\begin{theorem}\label{thm: maintheoremA}
Let $(R, \mathfrak{m}) \to (S, \mathfrak{n})$ be a flat local map of Noetherian local rings. Assume:
\begin{enumerate}
\item Up to completion $S$ is standard graded;
\item $\dim R=\dim S$;
\item Up to completion $\mathfrak{m}S$ is homogeneous with homogeneous generators of degrees $t_1\leq t_2 \leq \ldots \leq t_r$.
\end{enumerate}
Then $e(S) \geq e(R)t_1...t_{r-d}$.    
\end{theorem}
\begin{proof}
We take $I=\mathfrak{m}S$. We may assume $S$ is complete by taking completion; in this case condition (1) implies that $S$ has a coefficient field. Condition (2) implies $I$ is $\mathfrak{n}$-primary; $I=\mathfrak{m}S$ is strongly Lech-independent by \Cref{prop: SLI mS}, so $(S,I)$ satisfies conditions (1)-(3) of \Cref{thm: boundstandardgrd}. In particular, $I$ is Lech-independent, so by Hanes' result in \cite{hanes2001length}, $l(S/I) \geq t_1t_2...t_r$. Also $\mathfrak{m}$ is $\Gamma'$-expandable for some $\Gamma'$, and in this case $I$ is also $\Gamma'$-expandable and $\Gamma'$ satisfies condition (4) of \Cref{thm: boundstandardgrd}. We have $e(R) = e(\Gamma') = e(\Gamma)$ by \Cref{prop: expansiondimeinv}. Now by \Cref{thm: boundstandardgrd}, $e(S)=e(\mathfrak{n}) \geq e(\Gamma)l(S/I)/t_rt_{r-1}...t_{r-d+1}\geq e(R)t_1t_2\ldots t_r/t_rt_{r-1}...t_{r-d+1}=e(R)t_1...t_{r-d}$.    
\end{proof}
\begin{remark}\label{rmk: morethanHanes}\Cref{thm: maintheoremA} is a generalization of some of Hane's results, for example, Corollary 3.2 of \cite{hanes2001length}. We make no assumptions on the minimal reduction of $\mathfrak{m}$ or $\mathfrak{m}S$. For example, consider $R=k[[x,y^2]]/xy^2 \to S=k[[x,y]]/xy^2$. Then neither $x$ or $y^2$ can be a minimal reduction of $\mathfrak{m}$. The minimal reduction consists of one element which is a linear combination of $x$ and $y^2$ which is not homogeneous in $S$. So we cannot use Hane's result, but we can apply \Cref{thm: maintheoremA} to prove $e(R) \leq e(S)$. In fact, $e(R)=2<3=e(S)$.
\end{remark}
We can strengthen the inequality in \Cref{thm: bound} using the asymptotic Samuel function.
\begin{definition}\label{def: barv}The asymptotic Samuel function is $\bar{v}:S \to \mathbb{R} \cup \{\infty\}$ such that $\bar{v}(x)=\lim_{n \to \infty}\ord(x^n)/n$.
\end{definition}
\begin{proposition}\label{prop: barvrational}Let $S$ be a local ring.
\begin{enumerate}
\item $\bar{v}$ is well-defined, that is, the limit exists for any $x \in S$.

\item $\bar{v}$ has values in $\mathbb{Q} \cup \{\infty\}$.

\item $\bar{v}(x) \geq \ord(x)$.
\end{enumerate}
\end{proposition}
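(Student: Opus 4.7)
My plan is to handle the three claims together by setting $a_n := \ord(x^n)$ and first establishing the superadditivity $a_{m+n} \geq a_m + a_n$, which is immediate from $x^{m+n} = x^m x^n \in \mathfrak{n}^{a_m}\mathfrak{n}^{a_n} \subset \mathfrak{n}^{a_m+a_n}$, with $\infty$ treated as absorbing (the degenerate case $x \in \bigcap_k \mathfrak{n}^k$ makes $\bar{v}(x) = \infty$ tautologically). From this a single appeal to Fekete's lemma in its superadditive form delivers (1) and most of (3) at once: for any superadditive sequence in $[0,\infty]$, the limit $\lim_{n \to \infty} a_n/n$ exists and equals $\sup_n a_n/n$. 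Part (3) then follows by taking $n = 1$ in the supremum: $\bar{v}(x) \geq a_1/1 = \ord(x)$.

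The substantive content is part (2). The obstacle is that even though each $a_n/n$ is rational, the supremum of a set of rationals can easily be irrational, so rationality must come from an independent source. My plan is to use the Rees valuations of $\mathfrak{n}$: there exist finitely many discrete valuations $v_1,\ldots,v_s$ attached to $\mathfrak{n}$ such that $\overline{\mathfrak{n}^p} = \{y \in S : v_i(y) \geq p \cdot v_i(\mathfrak{n}) \textup{ for all } i\}$, where each $v_i(\mathfrak{n}) := \min_{z \in \mathfrak{n}} v_i(z)$ is a positive integer. Combining this with the standard fact that $y \in \overline{\mathfrak{n}^p}$ forces $y^m \in \mathfrak{n}^{pm-c}$ for some constant $c$ and all sufficiently large $m$, I will establish
\[ \bar{v}(x) = \min_{1 \leq i \leq s} \frac{v_i(x)}{v_i(\mathfrak{n})}. \]
The $\leq$ direction comes from comparing $v_i(x^n) = n v_i(x)$ with $v_i(\mathfrak{n}^{a_n}) \geq a_n v_i(\mathfrak{n})$ to get $a_n/n \leq v_i(x)/v_i(\mathfrak{n})$ for every $i$; the $\geq$ direction uses the integral-closure-to-ordinary-power passage to produce, for every rational $p/q \leq \min_i v_i(x)/v_i(\mathfrak{n})$, elements of $\mathfrak{n}^{pm-c}$ of the form $x^{qm}$, hence $a_{qm}/(qm) \to p/q$ in the limit. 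The right-hand side is a minimum of finitely many nonnegative rationals (or $\infty$ if every $v_i$ vanishes on $x$ at infinity), giving $\bar{v}(x) \in \mathbb{Q}_{\geq 0} \cup \{\infty\}$.

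The main obstacle is importing the Rees valuation theorem cleanly, since reproving it here would constitute a serious digression; in practice I would invoke it from a standard reference such as the Huneke--Swanson monograph on integral closure. Once that black box is in hand, the rationality statement (2) drops out of the two-sided comparison above, and (1) and (3) are genuinely routine consequences of Fekete's lemma applied to $a_n = \ord(x^n)$.
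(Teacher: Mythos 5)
Your proposal is correct: superadditivity of $\ord(x^n)$ plus Fekete's lemma gives (1) and (3), and the Rees-valuation identity $\bar{v}(x)=\min_i v_i(x)/v_i(\mathfrak{n})$ (with the min of finitely many rationals, or $\infty$ exactly when $x$ is nilpotent) gives (2). This is essentially the same route as the paper, which simply cites Chapters 6 and 10 of Swanson--Huneke for (1) and (2) --- precisely where the Fekete and Rees-valuation arguments you sketch are carried out --- and proves (3) by the same inequality $\ord(x^n) \geq n\cdot\ord(x)$.
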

\begin{proof}For (1) and (2) see Chapter 6 and 10 of \cite{huneke2006integral}. (3) is true as $\ord(x^n) \geq n\cdot \ord(x)$.
\end{proof}
\begin{notations}\label{def: Eord}
We keep \Cref{assumption5.1} and assume $A=\{f_iu(x)|1 \leq i \leq l,u \in \Gamma\}$ as in \Cref{def: At}. Choose any sequence of positive rational numbers $s=(s_1<s_2<\ldots<s_r)$ and fix $N$ such that $Ns_i \in \mathbb{N}$ for all $s$. For $g=f_iu(x) \in A$, define 
$$\Eord_s(f_iu(x))=\ord(f_i)+\sum_{1 \leq i \leq r}\deg_{T_i}u\cdot s_i \in \frac{1}{N}\mathbb{N}.$$
By the unique expansion property, $\Eord_s(g)$ is uniquely defined on $A$. For a positive integer $t$, set
$$B_t=\{g \in A|N\Eord(g)<t\}.$$
For $t \geq 2$, set
$$\Delta B_t=B_{t+1}\backslash B_t=\{g \in A|N\Eord(g)=t\}$$
and set $\Delta B_1=B_1$.
\end{notations}
\begin{lemma}\label{lem: Btlimit}
We adopt \Cref{assumption5.1} and \Cref{def: Eord}.
\begin{enumerate}
\item Let $\tilde{c}(z)=\sum_{t \geq 0} \tilde{c}_tz^t$, where $\tilde{c}_t=|\Delta B_t|$. Then
    $$\tilde{c}(z)=P(z)HS_{\Gamma}(z^{Ns_1},z^{Ns_2},...,z^{Ns_r})$$
    where $P(z) \in \mathbb{Z}[z]$ with $P(1)=l=l(S/I)$ and $\tilde{c}(z)$ satisfies (P1), (P2$_d$), (P3$_{d+1}$) of \Cref{lem: residueleadingterm}.
\item We have 
    $$\tilde{c}(z)/(1-z)=\sum_{t \geq 0}|B_t|z^t.$$
\item $$\lim_{t \to \infty}\frac{d!|B_t|}{t^d}=l(S/I)\sum_{i \in \Lambda,|S_i|=d} \frac{1}{\Pi_{T_j \in S_i}(Ns_j)}=\frac{l(S/I)}{N^d}\sum_{i \in \Lambda,|S_i|=d} \frac{1}{\Pi_{T_j \in S_i}s_j}.$$
\end{enumerate}
\end{lemma}
Since $Ns_i \in \mathbb{N}$, the proof of the above theorem is exactly the same as \Cref{lem: Atlimitcomputation}, so we omit it.
\begin{lemma}\label{lem: BtspansSnt}Under \Cref{assumption5.1}, we assume $\bar{v}(x_i)> s_i$ for all $i$. Then:
\begin{enumerate}
\item For large $t$, $\Eord_s(g)\geq t$ implies $\ord(g)\geq t$, that is, $g \in \mathfrak{n}^t$.
\item For large $t$, $S/\mathfrak{n}^t$ can be spanned by $B_{Nt}$ over $k$.
\end{enumerate}

\end{lemma}
\begin{proof}
(1): We choose $\delta>0$ such that $\bar{v}(x_i)>s_i-\delta$ for all $i$. By definition of $\bar{v}(x_i)$, we can choose $n_0$ such that $\ord(x_i^n)\geq ns_i+n\delta$ for $n \geq n_0$. For a multi-index $a_1,a_2,\ldots,a_n$, write $|a|=\sum_{1 \leq i \leq n}a_i$. Assume $\Lambda_1=\{i:a_i>n_0\}$ and $\Lambda_2=\{i:a_i\leq n_0\}$, then
$$\ord(x_1^{a_1}x_2^{a_2}\ldots x_n^{a_n}) \geq \ord(\prod_{i \in \Lambda_1}x_i^{a_i})\geq \prod_{i \in \Lambda_1}\ord(x_i^{a_i})\geq \sum_{i \in \Lambda_1}(a_is_i+a_i\delta)$$
$$\overset{(\cdot)}{\geq} \sum_{1 \leq i \leq r}(a_is_i+a_i\delta)-rn_0(\sum_{1 \leq i \leq n}s_i+\delta)=\sum_{1 \leq i \leq r}a_is_i+|a|\delta-rn_0(\sum_{1 \leq i \leq n}s_i+\delta)$$
where $(\cdot)$ comes from the fact that $\sum_{i \in \Lambda_2}(a_is_i+a_i\delta) \leq \sum_{i \in \Lambda_2}(n_0s_i+n_0\delta) \leq rn_0(\sum_{1 \leq i \leq n}s_i+\delta)$. The inequality can be rewritten as
$$\ord(u(x)) \geq \Eord(u(x))+\deg(u)\delta-rn_0(\sum_{1 \leq i \leq n}s_i+\delta)$$
We set $D=\max\{\ord(f_i), 1 \leq i \leq l\}$ and $C=rn_0(\sum_{1 \leq i \leq n}s_i+\delta)+D$. Use the fact that $\Eord(f_iu(x))=\ord(f_i)+\Eord(u(x))$, we get
\begin{align*}
\ord(f_iu(x)) \geq \ord(u(x)) \geq \Eord(u(x))+\deg(u)\delta-rn_0(\sum_{1 \leq i \leq n}s_i+\delta)\\
\geq\Eord(f_iu(x))+\deg(u)\delta-D-rn_0(\sum_{1 \leq i \leq n}s_i+\delta)=\Eord(f_iu(x))+\deg(u)\delta-C.    
\end{align*}
Therefore, if $\ord(f_iu(x))\leq \Eord(f_iu(x))$, then $\deg(u)<C/\delta<\infty$. There are only finitely many choices of $u$ that satisfy this condition and there are only finitely many choices of $f_i$, thus there are finitely many $g \in A$ with $\ord(g)\geq \Eord(g)$. Since every $g \in A$ has a finite expected order, for $t \gg 0$ these choices of $g$ satisfy $\Eord_s(g)<t$. Thus for $t \gg 0$, $\Eord_s(g)\geq t$ implies $\ord(g)>\Eord_s(g)\geq t$.

(2): We may replace $S$ by its completion to assume $S$ is complete. \Cref{assumption5.1} is preserved under completion and the asymptotic Samuel function is unchanged by completion. Every element in $S/\mathfrak{n}^t$ is of the form $f+\mathfrak{n}^t$. We expand $f$ as $f=\sum_{g \in A}c_gg$ where $c_g \in k$ by the unique expansion property. If $g \notin B_{Nt}$, then $\Eord(g)>t$, $\ord(g)>t$ and $g=0$ in $S/\mathfrak{n}^t$. Therefore, $f=\sum_{g \in B_{Nt}}c_{g}g$ in $S/\mathfrak{n}^t$. It suffices to prove this is a finite sum. Actually, $\Eord(f_iu(x))\geq s_1\deg(u)$ where $\deg$ is the total degree in $k[T_1,\ldots,T_r]$. Thus $\Eord(f_iu(x))<t$ implies $\deg(u)<t/s_1$, so there are only finitely many choices of $u$, and there are also finitely many choices of $f_i$. Therefore, there are only finitely many $g \in A$ with $\Eord(g)<t$.
\end{proof}
\begin{theorem}\label{thm: maintheoremB0}Assume \Cref{assumption5.1} holds. Denote $\bar{v}(x_i)=q_i$ and assume that $q_1 \leq q_2 \leq ... \leq q_r$. Then $e(S) \leq e(\Gamma)l(S/I)/q_1...q_{d-1}q_d$ and $q_d < \infty$. If moreover $I$ is strongly Lech-independent, then $e(S) \leq e(I)/q_1...q_{d-1}q_d$.
\end{theorem}
\begin{proof}For sufficiently small rational number $\delta>0$, we take any $s_i=q_i-\delta$, then $0<s_1\leq s_2\leq\ldots\leq s_r$. \Cref{lem: BtspansSnt} implies $l(S/\mathfrak{n}^t)\leq |B_{Nt}|$ for $t \gg 0$. Therefore, by \Cref{lem: Btlimit}, we have
$$e(\mathfrak{n})=\lim_{t \to \infty}\frac{d!l(S/\mathfrak{n}^t)}{t^d}\leq\lim_{t \to \infty}\frac{d!|B_{Nt}|}{t^d}=N^d\lim_{t \to \infty}\frac{d!|B_t|}{t^d}$$
$$=l(S/I)\sum_{i \in \Lambda,|S_i|=d} \frac{1}{\Pi_{T_j \in S_i}s_j}\leq l(S/I)|i \in \Lambda,|S_i|=d| \frac{1}{s_1\ldots s_{d-1}s_d}$$
$$=e(\Gamma)l(S/I)/s_1\ldots s_{d-1}s_d.$$
Now let $\delta \to 0$, $s_i \to q_i$, so we get the desired inequality. The last equality is true because $I$ is strongly Lech-independent implies $e(I)=e(\Gamma)l(S/I)$.
\end{proof}
\begin{theorem}\label{thm: maintheoremB}
Assume $S$ has equal characteristic, $I$ is an $S$-ideal which is strongly Lech-independent and $l(S/I)<\infty$. Let $d=\dim S$. Assume $I$ is minimally generated by $(x_1,...,x_r)$, $\bar{v}(x_i)=q_i$ where $q_1 \leq q_2 \leq ... \leq q_r$. Then $e(S) \leq e(I)/q_1...q_{d-1}q_d$.    
\end{theorem}
\begin{proof}
We may assume $S$ is complete by completion, then $S$ has a coefficient field. Since $I$ is strongly Lech-independent, there is a standard set $\Gamma$ such that $I$ is $\Gamma$-expandable, thus \Cref{assumption5.1} is satisfied for the data $(S,I,x,\Gamma)$. So the result follows from \Cref{thm: maintheoremB0}.   
\end{proof}

\section*{Acknowledgements}
The author was supported in part by NSF-FRG grant DMS-1952366 and the Ross-Lynn Research Scholar Fund. The author would like to thank Linquan Ma for introducing this problem, providing references and explaining the proof of Lech's conjecture in dimension 3 and for standard graded base rings. The author would also like to thank Farrah Yhee for helpful comments.

\bibliographystyle{plain}
\bibliography{refsLILC}

\end{document}